\patchcmd\Gread@eps{\@inputcheck#1 }{\@inputcheck"#1"\relax}{}{}
\newtheorem{theorem}{Theorem}[section]
\newtheorem{corollary}[theorem]{Corollary}
\newtheorem{lemma}[theorem]{Lemma}
\newtheorem{observation}[theorem]{Observation}
\newcommand{\qed}{\hfill $\square$\medskip}
\begin{document}

\title{Pebbling Number of Polymers}
\author{Fatemeh Aghaei \and 
Saeid Alikhani\footnote{Corresponding author}
}

\date{\today}

\maketitle

\begin{center}
Department of Mathematical Science, Yazd University, 89195-741, Yazd, Iran\\
{\tt \tt aghaeefatemeh29@gmail.com, alikhani@yazd.ac.ir}
\end{center}

\begin{center}
(Received February 28, 2021) 
\end{center}

\begin{abstract}
  Let $G=(V,E)$ be a simple graph. 
  A function $f:V\rightarrow \mathbb{N}\cup \{0\}$ is called a configuration of pebbles on the vertices of $G$  and the quantity $\vert f\vert=\sum_{u\in V}f(u)$
  is called the weight of $f$ which is  just the total number of pebbles assigned to vertices. A pebbling step from a vertex $u$ to one of its
  neighbors $v$  reduces $f(u)$ by two and increases $f(v)$  by one. A pebbling configuration $f$ is said to be solvable if for every vertex $ v $, there exists a sequence (possibly empty) of pebbling moves that results in a pebble on $v$. The pebbling number $ \pi(G) $ equals the minimum number $ k $ such that every pebbling configuration $ f $ with $ \vert f\vert = k $ is solvable. 
  Let $ G $ be a connected graph constructed from pairwise disjoint connected graphs $ G_1,...,G_k $ by selecting a vertex of $ G_1 $, a vertex of $ G_2 $, and identifying these two vertices. Then continue in this manner inductively. We say that $ G $ is a polymer graph, obtained by point-attaching from monomer units $ G_1,...,G_k $. 
  In this paper, we study the pebbling number of some polymers.
\end{abstract}

\baselineskip=0.30in

\section{Introduction and definitions}
Let $G=(V,E)$ be a simple graph of order $n$. 
A function $f:V\rightarrow \mathbb{N}\cup \{0\}$ is called a configuration of pebbles on the vertices of $G$  and the quantity $\vert f\vert=\sum_{u\in V}f(u)$
is called the size of $f$ which is  just the total number of pebbles assigned to vertices. A pebbling step from a vertex $u$ to one of its neighbors $v$  reduces $f(u)$ by two and increases $f(v)$  by one. A pebbling configuration $f$ is said to be solvable if for every vertex $ v $, there exists a sequence (possibly empty) of pebbling moves that results in a pebble on $v$. The pebbling number $ \pi(G) $ equals the minimum number $ k $ such that every pebbling configuration $ f $ with $ \vert f\vert = k $ is solvable.

Given a specified target vertex $ r $ we say that $ f $ is $ t $-fold $ r $-solvable if some sequence of pebbling steps places $ t $ pebbles on $ r $. The $ t $-fold pebbling number of $ G $ is defined to be $ \pi_{t}(G)=\max _{r\in V(G)}\pi_{t}(G,r) $. When $ t=1 $, we have $ \pi(G) $.

The configuration with a single pebble on every vertex except the target shows that $\pi(G)\geq n$, while the configuration with $2^{ecc(r)}-1$  pebbles on the farthest vertex from $r$, and no
pebbles elsewhere, shows that $\pi(G)\geq 2 ^{{\rm diam}(G)}$ when $r$ is
chosen to have $ecc(r)=diam(G)$. Because the $ r $-solvability of a configuration is not destroyed by adding edges, for every root vertex $ r $  we have  $ \pi(H,r)\geqslant\pi(G,r) $ whenever $ H $ is a connected spanning subgraph of $ G $. It is called the subgraph bound.

As usual let $Q^d$ be the $d$-dimensional hypercube  as the graph on all binary $d$-tuples,
pairs of which that differ in exactly one coordinate are
joined by an edge. Chung \cite{Chung} proved that $\pi(Q^d)=2^d$. Graphs $G$ like
$Q^d$  which have $\pi(G)=|V(G)|$   to be known as Class $0$.
The terminology comes from a lovely theorem of Pachter,
Snevily, and Voxman \cite{Pachter}, which states that if $diam(G)=2$, 
then $\pi(G)\leqslant n+1$. 
Therefore there are two classes of diameter two graphs, Class $0$ and
Class $1$. The
Class $0$ graphs are $2$-connected.

Pachter et al. in \cite{Pachter}, defined the optimal pebbling number $ \pi^{*}(G) $ to be the minimum weight of a solvable pebbling configuration of $G$. A solvable pebbling configuration of $G$ with weight $ \pi^{*}(G) $ is called a $ \pi^{*}$-configuration. The decision problem associated with computing the optimal pebbling number was shown to be NP-Complete in \cite{Milans}.

M. Chellali et al. in \cite{Chellali} introduced a generalization of the optimal pebbling number, that a pebbling configuration $  f $ is a $ t $-restricted pebbling configuration (abbreviated $ t RPC $) if $ f(v) \leqslant t $ for all $ v \in V $. They defined the $ t $-restricted optimal pebbling number $ \pi^{*}_{t}(G) $ to be the minimum weight of a solvable $  t RPC $ on $ G $. If $f$ is a solvable $ t RPC $ on $G$ with $ \vert f\vert=\pi^{*}_{t}(G) $, then $f$ is called a $ \pi^{*}_{t} $-configuration of $ G $. We note that the limit of $ t  $ pebbles per vertex applies only to the initial configuration. That is, a pebbling move may place more than $  t $ pebbles on a vertex.

Let $ G $ be a connected graph constructed from pairwise disjoint connected graphs $ G_1,...,G_k $ by selecting a vertex of $ G_1 $, a vertex of $ G_2 $, and identifying these two vertices. Then continue in this manner inductively. We say that $ G $ is a polymer graph, obtained by point-attaching from monomer units $ G_1,...,G_k $. Such graphs can be decomposed into subgraphs that we call monomer units. Cacti are some particular cases of these graphs. Sombor index of these kinds of graphs has studied in \cite{Alikhani4}. Also the distinguishing labeling and distinguishing number  of these kinds of graphs studied in \cite{IJMC}. 
Fluorescent labeling of biocompatible block copolymers studied in \cite{Mater}.

\medskip

In this paper, we consider the pebbling number of polymer graphs. 
In Section 2, we consider some special cases of polymers which is called the generalized friendship graphs and obtain their pebbling numbers.
In Section 3,  the pebbling number  of some cactus chains which are polymer graphs and that are of importance in chemistry are computed.  We extend the  results of Sections 2 and 3, in order to study the pebbling number  of polymer graphs.

\section{Pebbling number of some generalized friendship graphs}
The friendship graph $ F_{n,3} $ can be constructed by joining $ n $ copies of the cycle graph $ C_{3} $ with a common vertex $ v $ (see Figure \ref{f4}). The generalized friendship graph $ F_{n,m} $ is a collection of $ n $ cycles (all of order $ m $), meeting at a common vertex (see Figure  \ref{f5}). The generalized friendship graphs are cacti. In \cite{Alikhani1} we proved $ \pi(F_{n,3})=2n+2 $, i.e., friendship graphs are class $1$.
By putting  two pebbles on the common vertex $ v $ of the friendship graph $ F_{n,3} $, we obtain the following observation: 

\begin{observation}
	$ \pi^{*}(F_{n,3})=\pi^{*}_{2}(F_{n,3})=2. $
\end{observation}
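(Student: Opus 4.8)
The plan is to prove both equalities by sandwiching each parameter between a lower bound of $2$ and an upper bound of $2$, using the single witness configuration suggested by the hint. Write $v$ for the common vertex and let the remaining $2n$ vertices be the outer vertices of the $n$ triangles. The key structural fact I would record first is that $v$ is adjacent to every other vertex of $F_{n,3}$, since it lies in all $n$ copies of $C_3$; equivalently $\mathrm{ecc}(v)=1$, so $v$ is a universal vertex.

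For the upper bound, consider the configuration $f$ with $f(v)=2$ and $f(u)=0$ for every other vertex $u$. I would check solvability target by target: the target $v$ already carries two pebbles, and for any outer target $w$ a single pebbling step from $v$ to $w$ (removing the two pebbles from $v$ and depositing one on $w$) places a pebble on $w$. Hence $f$ is solvable, so $\pi^{*}(F_{n,3})\le \vert f\vert=2$. Since $f(v)=2\le 2$, the same $f$ is a valid $2$-restricted configuration, giving $\pi^{*}_{2}(F_{n,3})\le 2$ as well.

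For the lower bound, I would argue that no configuration of weight at most $1$ can be solvable. The empty configuration reaches no vertex, and a configuration consisting of a single pebble on one vertex admits no pebbling move at all, because each move requires two pebbles on a single vertex; thus it can only cover the vertex already holding the pebble, and $F_{n,3}$ has more than one vertex. Therefore every solvable configuration has weight at least $2$, so $\pi^{*}(F_{n,3})\ge 2$. Finally, because every $2$-restricted configuration is in particular a configuration, the minimum weight over the smaller class is at least the minimum over all configurations, i.e. $\pi^{*}_{2}(F_{n,3})\ge \pi^{*}(F_{n,3})\ge 2$. Combining the two bounds yields $\pi^{*}(F_{n,3})=\pi^{*}_{2}(F_{n,3})=2$.

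I do not expect a genuine obstacle here: the statement is an easy consequence of $v$ being a universal vertex, which collapses every target to at most one pebbling step, together with the trivial unsolvability of weight-$1$ configurations. The only point requiring a word of care is the inequality $\pi^{*}_{2}\ge\pi^{*}$, which is what lets the unrestricted lower bound transfer to the restricted parameter and force equality throughout.
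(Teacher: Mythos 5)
Your proposal is correct and matches the paper's reasoning: the paper justifies this observation with exactly the same witness (two pebbles on the common vertex, which is universal), and your added details---the weight-$1$ lower bound and the inequality $\pi^{*}_{2}\geqslant\pi^{*}$---are just the routine steps the paper leaves implicit.
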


\begin{figure}[ht]
	\centering
	\includegraphics[scale=.6]{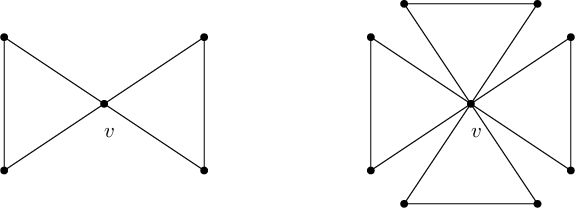}
	\caption{The friendship graphs $ F_{2,3} $ and $ F_{4,3} $}\label{f4}
\end{figure}

Pachter, Snevily and Voxman \cite{Pachter} have computed  the pebbling numbers of cycles. Snevily and Foster \cite{Snevily} gave an upper bound for the $ t $-pebbling numbers of odd cycles. This bound was shown to be exact in \cite{Herscovici}, which gave the $t$-pebbling number of even cycles as well. These numbers are given in Theorem \ref{2.2}.

\begin{theorem}{\rm\cite{Herscovici, Pachter, Snevily}}\label{2.2}
	The $ t $-pebbling number of the cycles $ C_{2n} $ and $ C_{2n+1} $ are
	\begin{equation*}
	\pi_{t}(C_{2n})=t2^{n} 
	~~~and~~\pi_{t}(C_{2n+1})=\dfrac{2^{n+2}-(-1)^{n}}{3}+2^{n}(t-1). 
	\end{equation*}
	In particular, 
	$ \pi(C_{2n})=2^{n} $ and $ \pi(C_{2n+1})=\dfrac{2^{n+2}-(-1)^{n}}{3}$.
\end{theorem}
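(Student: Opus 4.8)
The plan is to prove both formulas together by fixing a target $r$ (legitimate because $C_N$ is vertex-transitive, so $\pi_t(C_N)=\pi_t(C_N,r)$ for every $r$) and establishing matching lower and upper bounds, then to obtain the $t$-fold version by separating the cost of delivering the \emph{first} pebble from the marginal cost of each subsequent one. The recurring tool is the weight function $w(v)=2^{-d(v,r)}$, where $d(\cdot,r)$ denotes the cycle distance to $r$: a pebbling step directed toward $r$ preserves $\sum_v f(v)w(v)$, while a step away from $r$ (or across the "far edge" between two vertices equidistant from $r$) strictly decreases it. Hence the total weight is nonincreasing, and any configuration that ends with $t$ pebbles on $r$ must begin with weight at least $t$.

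For the lower bounds I would exhibit extremal unsolvable configurations. In $C_{2n}$ there is a unique farthest vertex $u$ at distance $n$; placing $t2^{n}-1$ pebbles on $u$ gives weight $t-2^{-n}<t$, so the configuration is not $t$-fold $r$-solvable and $\pi_t(C_{2n})\ge t2^{n}$. The odd case is genuinely different: $C_{2n+1}$ has \emph{two} farthest vertices $u_1,u_2$ at distance $n$, joined by the far edge, and the extremal configuration need not have weight below $t$ — its unsolvability is a parity/integrality obstruction rather than a weight deficit (already for $C_5$ one sees an unsolvable size-$4$ configuration of weight exactly $1$). I would therefore certify the odd lower bound by a direct greedy argument and optimize how the pebbles are split between $u_1,u_2$ (allowing them to be shuttled across the far edge at a cost factor of $2$); this optimization is exactly what produces the geometric-type value $\tfrac{2^{n+2}-(-1)^n}{3}$.

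For the upper bounds I would cut the cycle at $r$ into two arcs, push the pebbles of each arc greedily toward $r$ along it (treating each arc as a path whose delivery to an endpoint is governed by the same weight $w$), and combine the two arcs' contributions at $r$. For $C_{2n}$ both arcs have length $n$ and the antipode is shared, so the estimate closes cleanly and meets the lower bound, giving $\pi_t(C_{2n})=t2^{n}$. For $C_{2n+1}$ the two unequal arcs and the far edge force a case analysis on how the pebbles are distributed, which I would organize as an induction on $n$: after one coordinated layer of moves the problem collapses onto a shorter odd cycle, and the off-power-of-two constant reappears through the recursion.

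Finally, for the $t$-fold statement I would argue that once the base cost of a single pebble is paid, the adversary's best way to force one more pebble on $r$ is to stockpile an additional $2^{n}$ pebbles on a farthest vertex, whose geodesic to $r$ has length $n$; this yields the clean scaling $\pi_t(C_{2n})=t2^{n}$ and the affine form $\pi_t(C_{2n+1})=\tfrac{2^{n+2}-(-1)^n}{3}+2^{n}(t-1)$, consistent with the Snevily--Foster upper bound made exact in \cite{Herscovici}. \textbf{The main obstacle} is the upper bound for odd cycles: because the target is not antipodal-unique, the two arcs differ in length, and pebbles can be recombined across the far edge, one cannot finish with a one-line weight estimate as in the even case and must instead carry through the delicate inductive case analysis above.
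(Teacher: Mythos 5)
First, a point of context: the paper does not prove this theorem at all --- it is quoted from \cite{Herscovici, Pachter, Snevily} --- and the only part the authors re-derive themselves is the even-cycle half (in the theorem that follows, where the upper bound comes from the superposition inequality $\pi_t(G)\leqslant t\pi(G)$ and the lower bound from the same configuration you use, $t2^{n}-1$ pebbles on the antipode). Your even-cycle lower bound matches theirs and your weight-function justification of its unsolvability is actually cleaner than the paper's bare assertion. Your even-cycle upper bound, however, does not close as written: splitting $C_{2n}$ at $r$ into two arcs and delivering $\lfloor w_1\rfloor+\lfloor w_2\rfloor$ pebbles from arcs of weights $w_1,w_2$ can lose a pebble to the two floor functions (a configuration of size $t2^{n}$ guarantees only $w_1+w_2\geqslant t$, whence $\lfloor w_1\rfloor+\lfloor w_2\rfloor\geqslant t-1$), so you must either analyze the near-equality case separately or switch to the superposition argument $\pi_t\leqslant t\pi$, which avoids the issue entirely.

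The more serious gap is the odd case, which is the substantive half of the statement. You correctly identify why the weight function cannot certify the lower bound there (the $C_5$ example of an unsolvable size-$4$ configuration of weight exactly $1$ is a good sanity check), but you then defer everything that actually produces the constant $\tfrac{2^{n+2}-(-1)^{n}}{3}$: the exact extremal configuration of size $\tfrac{2^{n+2}-(-1)^{n}}{3}+2^{n}(t-1)-1$ and the verification that it is not $t$-fold solvable, and the inductive upper bound with its case analysis on how pebbles straddle the far edge. As submitted, the proposal is a plan for the odd case rather than a proof of it; the ``delicate inductive case analysis'' you flag as the main obstacle is precisely the content of the theorem, and without it the argument establishes only the even-cycle formula.
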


We present  another proof for the $ t $-pebbling numbers of even cycles. Note that $\pi(C_{2n})=2^n$. 
\begin{theorem} 
	For every number $n\geq 2$,	$\pi_{t}(C_{2n})=t2^{n}$.
\end{theorem}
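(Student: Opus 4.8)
The plan is to prove matching lower and upper bounds. For the lower bound I would use the standard weight function: fix a target $r$ and assign to a configuration $f$ the weight $w(f)=\sum_{v}f(v)2^{-d(v,r)}$. A pebbling step that moves a pebble toward $r$ preserves $w$, while a step away from $r$ strictly decreases it, so $w$ is non-increasing under any sequence of moves; since each pebble on $r$ contributes $1$, a $t$-fold $r$-solvable configuration must start with $w(f)\ge t$. Taking $r=v_0$ and placing all $t2^{n}-1$ pebbles on the antipodal vertex $v_n$ (the unique vertex at distance $n=\mathrm{diam}(C_{2n})$) gives $w(f)=(t2^{n}-1)2^{-n}=t-2^{-n}<t$, so this configuration is not $t$-fold $v_0$-solvable and $\pi_t(C_{2n})\ge t2^{n}$.

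For the upper bound, using vertex-transitivity I fix the target $r=v_0$ and let $v_0,v_1,\dots,v_{2n-1}$ be the vertices in cyclic order, so $v_n$ is the antipode and the cycle splits into two arms $A=v_0v_1\cdots v_n$ and $B=v_0v_{2n-1}\cdots v_n$, each a path of length $n$ with endpoints $v_0$ and $v_n$. The key auxiliary fact is a path lemma: on a path $u_0u_1\cdots u_m$ with target $u_0$, processing pebbles greedily from $u_m$ inward (always pushing $\lfloor\,\cdot\,/2\rfloor$ pebbles one step toward $u_0$) uses only weight-preserving moves and leaves at each interior vertex a residue of at most one pebble, so it delivers exactly $\lfloor\sum_i g(u_i)2^{-i}\rfloor$ pebbles to $u_0$. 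I would apply this lemma to the two arms separately, counting the pebbles already on $v_0$ directly and splitting the $f(v_n)$ pebbles on the shared antipode as $a$ pebbles routed along $A$ and $b=f(v_n)-a$ along $B$. Writing $W_A$ and $W_B$ for the resulting arm weights, the number delivered to $v_0$ is $f(v_0)+\lfloor W_A\rfloor+\lfloor W_B\rfloor$, and since $f(v_0)+W_A+W_B=w(f)\ge t$ (again because $\mathrm{diam}=n$ forces $w(f)\ge|f|2^{-n}=t$) one immediately gets at least $\lfloor w(f)\rfloor-1\ge t-1$ for every choice of split.

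The main obstacle is the resulting off-by-one: the two floors can lose almost a full pebble compared with $\lfloor W_A+W_B\rfloor$, so I must show the split $a$ can always be chosen to avoid this loss, i.e. so that $\{W_A\}+\{W_B\}<1$. I would argue by cases on the antipode. When $v_n$ carries many pebbles ($f(v_n)\ge 2^{n}-1$), the achievable values of $W_A$ range over all multiples of $2^{-n}$ across an interval of length at least $1$ (note each $W_A$ is itself a multiple of $2^{-n}$), so one can force $\{W_A\}=0$ and hence $\lfloor W_A\rfloor+\lfloor W_B\rfloor=\lfloor W_A+W_B\rfloor$, delivering $\lfloor w(f)\rfloor\ge t$. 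When $v_n$ carries few pebbles, almost all of the $t2^{n}$ pebbles sit at distance at most $n-1$, each contributing weight at least $2\cdot 2^{-n}$; a short computation then shows $w(f)$ exceeds $t$ by enough that even the wasteful bound $\lfloor w(f)\rfloor-1$ already reaches $t$. Reconciling these two regimes — pinning down the exact threshold on $f(v_n)$ and checking the small-$t$ corner cases — is the delicate part, and the lower-bound configuration (all mass on $v_n$) sits exactly at the boundary, handled by routing every antipodal pebble down a single arm.

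An alternative that sidesteps the split optimization is induction on $t$, with the base case $t=1$ supplied by $\pi(C_{2n})=2^{n}$ from Theorem \ref{2.2}, peeling off one pebble at a time; there the obstacle migrates to a delivery lemma asserting that one pebble can always be placed on $r$ at a net cost of at most $2^{n}$ pebbles, after which the remaining configuration of size at least $(t-1)2^{n}$ is treated by the inductive hypothesis. I expect the weight-plus-split argument above to be the cleaner route, with the antipodal floor loss as its single genuine difficulty.
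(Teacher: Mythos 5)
Your lower bound is the same configuration the paper uses ($t2^{n}-1$ pebbles on the antipode of the target), and your weight-function justification of its unsolvability is in fact more complete than the paper's bare assertion that one ``cannot move $t$ pebbles''; that half is fine. The gap is in your primary upper-bound route. In your second case ($f(v_n)\le 2^{n}-2$) the estimate you sketch gives $w(f)\ge 2t-f(v_n)2^{-n}\ge 2t-1+2^{1-n}$, and the ``wasteful bound'' $\lfloor w(f)\rfloor-1\ge t$ requires $w(f)\ge t+1$, which this delivers only when $t\ge 2$. For $t=1$ with $0<f(v_n)<2^{n}-1$ neither of your two regimes closes as written, so the case analysis is genuinely incomplete exactly where you flag it. (A smaller slip: in Case 1 the achievable values of $W_A$ span an interval of length $1-2^{-n}$, not $1$; what actually saves you is that they form $2^{n}$ consecutive multiples of $2^{-n}$, which suffices to hit every residue class and force $\{W_A\}=0$.)

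The paper's upper bound is the one-line argument you relegate to an ``alternative,'' and there is no delicate delivery lemma hiding in it: for any graph $G$, a configuration of size $t\,\pi(G)$ can be partitioned into $t$ sub-configurations each of size at least $\pi(G)$; each is $r$-solvable by the definition of $\pi(G)$, and the $t$ solutions move pairwise disjoint sets of pebbles, so executing them in turn places $t$ pebbles on $r$. Hence $\pi_t(G)\le t\,\pi(G)$, and with $\pi(C_{2n})=2^{n}$ from Theorem \ref{2.2} the upper bound follows with no cycle-specific analysis at all. I would promote that partition argument to the main proof and either drop the weight-plus-split argument or finish its small-$t$ corner cases if you want a self-contained, cycle-specific derivation.
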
 
\begin{proof}
	Consider the graph $H$ as $ t $ copies of a graph $ G $ and a configuration $ f_{i} $ of size $ \vert f_{i}\vert=\pi(G) $ on the copy $ G_{i}=G $ ($ i=1,...,t $). So,  the configurations $ f_{i} $ are solvable. Let $ F(v)=\sum f_{i}(v) $ be a configuration  on the graph $ H $ then by putting at least $ t $ pebbles on any vertex of $ G $, we have $ \pi_{t}(G)\leqslant t\pi(G) $. So an upper bound for the $ t $-pebbling numbers of even cycles is $ \pi_{t}(C_{2n})\leqslant t2^{n} $. Suppose that $V(C_{2n})=\{v_1,...,v_{2n}\}$ such that the vertex with label $v_i$ is adjacent to vertex with label $v_{i+1}$.   
	If we place $ t2^{n}-1 $ pebbles on the vertex $v_i$, then we cannot move $ t $ pebbles to the vertex $v_{n+i}$ which consider as the target vertex (note that two vertices $v_i$ and $v_{n+i}$ are opposite, if the even cycles considered as regular polygon).  Therefore $ \pi_{t}(C_{2n})=t2^{n} $.\qed
\end{proof}

Now by the $ t $-pebbling numbers of even cycles, we obtain the pebbling number of the generalized friendship graph $F_{n,2k}$. 

\begin{theorem}\label{*}
	$ \pi(F_{n,2k})=2^{2k}+(2^{k}-1)(n-2) $, for $ n,k\geqslant 2 $. 
\end{theorem}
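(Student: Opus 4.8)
The plan is to prove matching lower and upper bounds. Fix the common vertex $v$ and write $G_1,\dots,G_n$ for the $n$ copies of $C_{2k}$ sharing $v$; in $G_i$ let $w_i$ be the vertex antipodal to $v$, so $d(w_i,v)=k$. The only facts I need from earlier are that, by Theorem \ref{2.2} and the definition $\pi_t(G,r)\le\pi_t(G)$, a configuration of size $t2^{k}$ on a single copy can deliver $t$ pebbles to any prescribed target on that copy; this yields two usable statements: (i) $2^{k}$ pebbles anywhere on a copy reach any target on it (the case $t=1$), and (ii) a configuration of size $m$ on one copy can deliver at least $\lfloor m/2^{k}\rfloor$ pebbles to $v$ (take $t=\lfloor m/2^k\rfloor$).

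For the lower bound I would exhibit an unsolvable configuration of size $2^{2k}+(2^{k}-1)(n-2)-1$ with target $r=w_1$: place $2^{2k}-1$ pebbles on $w_2$ and $2^{k}-1$ pebbles on each of $w_3,\dots,w_n$, leaving $G_1$ and $v$ empty. Every pebble reaching $r$ must pass through $v$; splitting the pebbles of a copy over its two arcs of length $k$ shows $G_2$ delivers at most $\lfloor(2^{2k}-1)/2^{k}\rfloor=2^{k}-1$ pebbles to $v$ and each $G_j$ with $j\ge 3$ delivers $\lfloor(2^{k}-1)/2^{k}\rfloor=0$. Hence at most $2^{k}-1$ pebbles can be gathered on $v$, one short of the $2^{k}$ needed to move a pebble the distance $k$ to $r$, so the configuration is unsolvable and $\pi(F_{n,2k})\ge 2^{2k}+(2^{k}-1)(n-2)$.

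For the upper bound, fix any configuration $f$ of size $|f|=2^{2k}+(2^{k}-1)(n-2)$ and any target $r$; by symmetry assume $r$ lies in $G_1$ (the case $r=v$ is analogous and easier). Write $m_i$ for the number of pebbles of $f$ on the non-$v$ vertices of $G_i$. First I funnel everything outside $G_1$ toward $v$: since distinct copies meet only at $v$, the deliveries are independent and by (ii) I can accumulate $q\ge f(v)+\sum_{i\ge 2}\lfloor m_i/2^{k}\rfloor$ pebbles on $v$. This reduces the whole problem to a single instance on $G_1\cong C_{2k}$ carrying the restriction of $f$ to $G_1$ together with $q$ extra pebbles on $v$, a configuration of total size $m_1+q$. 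By (i) it suffices to prove $m_1+q\ge 2^{k}=\pi(C_{2k})$, after which $r$ is reachable on $G_1$.

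The crux is this counting step. Writing $x=m_1+f(v)$ and using $\sum_{i\ge 2}\lfloor m_i/2^k\rfloor=\bigl(\sum_{i\ge 2}m_i-W\bigr)/2^{k}$ with total residue $W=\sum_{i\ge 2}(m_i\bmod 2^{k})\le (n-1)(2^{k}-1)$, one obtains
\begin{equation*}
m_1+q\ \ge\ x+\frac{|f|-x-W}{2^{k}}\ \ge\ \frac{|f|-(n-1)(2^{k}-1)}{2^{k}},
\end{equation*}
where the last inequality holds because the coefficient $1-2^{-k}$ of $x$ is nonnegative and $W\le(n-1)(2^{k}-1)$. Substituting $|f|=2^{2k}+(2^{k}-1)(n-2)$ makes the right-hand side equal to $2^{k}-1+2^{-k}$, and since $m_1+q$ is an integer strictly exceeding $2^{k}-1$ we conclude $m_1+q\ge 2^{k}$, completing the upper bound. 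The main obstacle is exactly to see that the floor losses when funneling each outer copy to $v$ — at most $2^{k}-1$ per copy — are precisely absorbed by the surplus built into $|f|$; it is this modular bookkeeping, rather than a weight-function estimate (which is too lossy across the cut vertex $v$), that forces the count past the threshold $\pi(C_{2k})=2^{k}$.
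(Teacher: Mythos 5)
Your proposal is correct, and its skeleton matches the paper's: both arguments hinge on the $t$-pebbling number $\pi_{t}(C_{2k})=t2^{k}$ from Theorem \ref{2.2}, both split into the target cases $r=v$ and $r\neq v$ around the cut vertex $v$, and both use the identical extremal configuration ($2^{2k}-1$ pebbles on the vertex antipodal to $v$ in one cycle and $2^{k}-1$ pebbles on each of the remaining cycles) for the lower bound. The genuine difference lies in the upper bound: the paper simply asserts that this configuration is ``the largest $r$-unsolvable configuration of maximum size $(2^{2k}-1)+(2^{k}-1)(n-2)$'' and concludes, whereas you actually prove that maximality claim --- funneling $\lfloor m_{i}/2^{k}\rfloor$ pebbles from each non-target cycle to $v$ and showing, via the residue bound $W\leqslant (n-1)(2^{k}-1)$, that any configuration of the stated size forces at least $2^{k}=\pi(C_{2k})$ pebbles' worth onto the target cycle, so solvability follows. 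That modular bookkeeping is precisely the step absent from the paper, so your write-up is strictly more complete; what the paper's version buys in exchange is only brevity. One point you should still make airtight: the assertions that the copies deliver to $v$ ``independently'' and that every pebble reaching $r$ must pass through $v$ rest on the cut-vertex structure of $F_{n,2k}$, and a fully rigorous treatment would invoke the No-Cycle Lemma of \cite{Bunde} (cited in the paper) to rule out wasteful circulation of pebbles through $v$ and back; this is the same level of implicitness the paper permits itself, but one sentence would close the gap.
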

\begin{proof}
	Let $ F_{n,2k} $ is constructed by joining $ n $ copies of the cycle graph
	$ C_{2k} $ with a common vertex $ v $. We have two cases:
	\begin{itemize}
		\item[(1)]
		The target vertex $ r $ is the common vertex in the graph $ F_{n,2k} $. In this case,  by pigeonhole principle we have $  \pi(F_{n,2k},r)=n(2^{k}-1)+1 $.
		\item[(2)]
		The target vertex $ r $ is not the common vertex $ v $. Therefore, we need at least $ 2^{k} $ pebbles to solve the target vertex in the cycle $ C_{2k} $. If we put $ 2^{k}-1 $ pebbles on the common vertex, then the opposite vertex of target $ r $ cannot solve. By Theorem \ref{*}, $ \pi_{2^{k}}(C_{2k})=2^{2k} $. So $ f $ is the largest $ r $-unsolvable configuration of maximum size $ (2^{2k}-1) +(2^{k}-1)(n-2) $ that gives $ 2^{2k}-1 $ pebbles to unique cycle and $ 2^{k}-1 $ pebbles to other cycles. 
		Therefore $ \pi(F_{n,2k})=\max_{r}\pi(F_{n,2k},r)=2^{2k}+(2^{k}-1)(n-2) $.\qed
	\end{itemize}
\end{proof}

We need the following theorem to obtain more result on the pebbling number, the optimal pebbling number and the $2$-restricted optimal pebbling number   of $F_{n,4}$.

\begin{theorem}\rm{\cite{Alikhani3}}\label{**}
	Let $G$ be a nontrivial connected graph. Then
	$ \pi^{*}_{2}(G)=5 $ if and only if $ \gamma_{t}(G)\geqslant 4 $ and while for any $ u, v\in V(G) $, $  \lbrace u, v \rbrace\cup(N(u)\cap N(v))\ $ cannot dominate $ G $, one of the following conditions happens:
	\begin{itemize}
		\item[(i)]
		There are three vertices $ u, v $ and $ w\in N(u)\cap N(v) $ such that $ \lbrace u, v, w\rbrace\cup(N(u)\cap N(v))\cup(N(u)\cap N(w))\cup(N(v)\cap N(w))$ dominates $ G $.
		\item[(ii)]
		There are  $ u , v $ and $ w\in N(v) $ such that $ \lbrace u, v, w\rbrace\cup(N(u)\cap N(v))\cup(N(u)\cap N(w))$ dominates $ G $.
		\item[(iii)]
		There are  $ u , v $ and $ w\in N(N(u)\cap N(v)) $ such that $ \lbrace u, v, w\rbrace\cup(N(u)\cap N(v))$ dominates $ G $.
	\end{itemize}  
\end{theorem}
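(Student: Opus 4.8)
The plan is to read the statement as the conjunction of two assertions — that $\pi^{*}_2(G)>4$ and that $\pi^{*}_2(G)\le 5$ — and to prove each by translating solvability of a $2$-restricted configuration into an explicit domination condition. The dictionary I would set up first is this: if a configuration puts $2$ pebbles on a vertex $x$, then in one move it reaches every vertex of $N[x]$; if it puts $2$ pebbles on each of $x$ and $y$, then by first merging one pebble from each on a common neighbour $z\in N(x)\cap N(y)$ and firing $z$ it additionally reaches $N[z]$. Consequently a configuration with $2$ pebbles on $u$ and $2$ on $v$ is solvable exactly when $N[u]\cup N[v]\cup N[N(u)\cap N(v)]=V(G)$, i.e. exactly when $\{u,v\}\cup(N(u)\cap N(v))$ dominates $G$. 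This identity is the engine of the whole proof.

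For the inequality $\pi^{*}_2(G)>4$ the reduction I would prove is that among $2$-restricted configurations of weight at most $4$ the strongest solvable ones are the $(2,2)$ configurations — concentrating the available pebbles into two doubled vertices never shrinks the reachable set — so a solvable weight-$\le 4$ configuration exists if and only if some pair $u,v$ has $\{u,v\}\cup(N(u)\cap N(v))$ dominating $G$. Negating this yields the non-domination clause of the theorem. The total-domination clause is then automatic: a total dominating set of size $3$ induces a $P_3$ or a $K_3$, and in either case one of its vertices is a common neighbour of the other two, so $\gamma_t(G)\le 3$ already produces a dominating pair. Hence ``no dominating pair'' forces $\gamma_t(G)\ge 4$, and the two negative clauses together are precisely $\pi^{*}_2(G)>4$.

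For the bound $\pi^{*}_2(G)\le 5$ I would argue that an optimal weight-$5$ configuration may be taken of shape $(2,2,1)$, with doubled vertices $u,v$ and a single pebble on $w$ (thinner shapes reduce to the weight-$4$ analysis already excluded). Everything then hinges on computing the reachable set as a function of the position of $w$ relative to $u,v$, which is exactly what (i)--(iii) record. If $w\in N(u)\cap N(v)$, one may pour a pebble from $v$ onto $w$ and combine the result with a pebble from $u$ on any $y\in N(u)\cap N(w)$ (and symmetrically), so the reachable set is the closed neighbourhood of $\{u,v,w\}\cup(N(u)\cap N(v))\cup(N(u)\cap N(w))\cup(N(v)\cap N(w))$, giving (i); if $w\in N(v)$ only the cross term $N(u)\cap N(w)$ survives, giving (ii); and if $w$ is merely adjacent to a common neighbour of $u,v$, then augmenting $w$ to two pebbles consumes all four pebbles on $u$ and $v$, so no cross term survives and the reachable set is the closed neighbourhood of $\{u,v,w\}\cup(N(u)\cap N(v))$, giving (iii). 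In each case ``reachable $=V(G)$'' is literally the stated domination condition, so each condition both produces a solvable weight-$5$ configuration and, conversely, is forced by one.

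I expect the genuine work to lie in the converse (unreachability) estimates and in the case bookkeeping, not in exhibiting the pebbling sequences. The natural tool is the potential $\Phi_t(f)=\sum_{x}f(x)2^{-d(x,t)}$, which is non-increasing under any move directed toward $t$, so every reachable target satisfies $\Phi_t(f)\ge 1$; this cleanly kills targets at distance $\ge 3$ from the entire support. The delicate point is the distance-$2$ targets, where $\Phi_t$ may exceed $1$ yet $t$ is still unreachable unless two pebbles can be \emph{merged} on a common neighbour of $t$ — so the proof that nothing outside the listed closed neighbourhoods is reachable is an integrality argument about merging, not a potential bound, and it is exactly here that the intersection terms $N(u)\cap N(v)$, $N(u)\cap N(w)$, $N(v)\cap N(w)$ are forced. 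The second obstacle is completeness of the case split: one must check that a $(2,2,1)$ configuration whose single pebble is \emph{useless} (adjacent neither to a doubled vertex nor to a common neighbour) cannot be solvable unless some other choice of $u,v,w$ already satisfies one of (i)--(iii), and that shapes other than $(2,2,1)$ never do better. Verifying the mild redundancy between ``$\gamma_t(G)\ge 4$'' and ``no dominating pair'' belongs to the same bookkeeping.
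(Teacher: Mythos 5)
This theorem is not proved in the paper at all: it is imported verbatim from \cite{Alikhani3} (Alikhani--Aghaei, ``More on the 2-restricted optimal pebbling number''), so there is no in-paper proof to compare against. Judged on its own, your overall strategy --- translate solvability of a $2$-restricted configuration into domination by a closed neighbourhood, then classify by the shape of the configuration --- is the natural one, and your forward direction (each of (i)--(iii) yields an explicit solvable weight-$5$ configuration of shape $(2,2,1)$, and the three cases correctly record which cross terms $N(\cdot)\cap N(\cdot)$ survive) is right. But two of the reductions you assert to get the converse are false as stated, and they are exactly the load-bearing steps.

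First, the claim that ``concentrating the available pebbles into two doubled vertices never shrinks the reachable set'' is wrong. The configuration with $2$ pebbles on $u$ and $1$ each on $w_1,w_2\in N(u)$ reaches $N[u]\cup N[w_1]\cup N[w_2]$, the closed neighbourhood of a star; no single $(2,2)$ configuration obtained by concentrating these pebbles need contain that set (doubling $u$ and $w_1$ loses $N[w_2]$). The correct reduction is different: one must \emph{re-root}, doubling $w_1$ and $w_2$ and observing that $u\in N(w_1)\cap N(w_2)$, so $\{w_1,w_2\}\cup(N(w_1)\cap N(w_2))\supseteq\{u,w_1,w_2\}$ dominates --- and the degenerate subcase where a single pebble sits on a vertex not adjacent to $u$ needs a separate argument via a neighbour of that vertex. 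Second, the dismissal of the shape $(2,1,1,1)$ as ``reducing to the weight-$4$ analysis'' is also wrong: with $2$ pebbles on $u$ and singles along a path $u\sim w_1\sim w_2\sim w_3$ one can cascade a pebble down the path and reach all of $N[\{u,w_1,w_2,w_3\}]$, the closed neighbourhood of a dominating $P_4$ (or of a $K_{1,3}$ in the star arrangement), which no weight-$4$ configuration reaches. Showing that these are nevertheless subsumed by (i)--(iii) again requires re-rooting --- e.g.\ a dominating $P_4$ with vertex set $\{u,w_1,w_2,w_3\}$ is realised by the $(2,2,1)$ configuration $(2@u,2@w_2,1@w_3)$ via condition (ii) with $w_1\in N(u)\cap N(w_2)$, and a dominating $K_{1,3}$ via condition (iii) by doubling two leaves. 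You flag the right places where work remains (the useless-single case and the completeness of the shape analysis), but the specific justifications you give for collapsing the case analysis to $(2,2)$ and $(2,2,1)$ configurations do not hold, so the converse direction of the equivalence is genuinely unproved in your sketch.
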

\begin{theorem}
	For $ n\geqslant 2 $, we have:
	\begin{itemize}
		\item[(i)]
		$\pi(F_{n,4})=3n +10.$
		\item[(ii)]
		$ \pi^{*}(F_{n,4})=4. $
		\item[(iii)]
		$
		\pi^{*}_{2}(F_{n,4})= 
		\left\{
		\begin{array}{ll}
		4, & \mbox{if}\quad{n=2};\\ [10pt]
		5, & \mbox{if}\quad{n=3};\\[10pt]
		6, & \mbox{if}\quad{n\geq 4.}
		\end{array}
		\right.
		$
	\end{itemize}
\end{theorem}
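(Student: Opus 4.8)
Throughout, let $v$ be the common vertex; in the $i$-th copy of $C_4$ write $a_i,c_i$ for the two neighbours of $v$ and $b_i$ for the vertex antipodal to $v$, so that $d(v,b_i)=2$, $\mathrm{ecc}(v)=2$, and $d(b_i,b_j)=4$ for $i\neq j$. Part~(i) needs no separate work: it is precisely the case $k=2$ of Theorem~\ref{*}, since $2^{2k}+(2^{k}-1)(n-2)=16+3(n-2)=3n+10$. The remaining two parts concern the optimal and the $2$-restricted optimal pebbling numbers, where the upper bounds are easy and the lower bounds are the real content.

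For (ii), the bound $\pi^{*}(F_{n,4})\le 4$ is witnessed by four pebbles on $v$: since $\mathrm{ecc}(v)=2$, they reach every vertex. For the lower bound I would use the weight inequality: if $f$ is $r$-solvable then $\sum_{u}f(u)2^{-d(u,r)}\ge 1$, because a pebbling move toward $r$ leaves $\sum_{u}f(u)2^{-d(u,r)}$ unchanged while every other move strictly decreases it. Taking $r=b_i$ and summing over $i=1,\dots,n$ yields $\sum_{u}f(u)\,c_u\ge n$, where $c_u=\sum_{i}2^{-d(u,b_i)}$ evaluates to $c_v=n/4$, $c_{a_i}=c_{c_i}=(n+3)/8$ and $c_{b_i}=(n+15)/16$. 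For $|f|=3$ the left-hand side is at most $3\max_{u}c_u$, and a short comparison shows $3\max_{u}c_u<n$ as soon as $n\ge 4$; hence no configuration of weight $3$ is solvable in that range. The two values $n=2,3$ I would settle by a direct case analysis on how the three pebbles split among $v$ and the $n$ copies, using that a lone pebble cannot move and that a far vertex of one copy cannot be reached from the weight-$\le 3$ pebbles of the other copies.

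For (iii) I would first exhibit solvable $2$-restricted configurations giving the upper bounds: for $n=2$, put two pebbles on $v$ and one on each of $b_1,b_2$ (weight $4$); for $n=3$, put two on $v$ and one on each of $b_1,b_2,b_3$ (weight $5$); and for all $n\ge 2$, put two pebbles on each of $v,a_1,c_1$ (weight $6$). The last is solvable because two pebbles on $v$ reach every side vertex, $a_1$ reaches $b_1$ directly, and for $i\ge 2$ one sends $a_1$ and $c_1$ back to $v$ to collect four pebbles there, then pushes two to $a_i$ and one to $b_i$. Since $\pi^{*}_{2}\ge\pi^{*}=4$, the case $n=2$ is already complete. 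The heart of the matter is the lower bound $\pi^{*}_{2}(F_{n,4})\ge 6$ for $n\ge 4$, for which I would prove the structural claim that, under a $2$-restricted configuration of weight at most $5$, the target $b_i$ is reachable only when the $i$-th copy already carries a pebble. Indeed, reaching $b_i$ requires delivering two pebbles to $a_i$ or to $c_i$ (or one onto $b_i$); the only route from outside copy~$i$ passes through $v$, and raising $v$ to the four pebbles needed to forward two to $a_i$ costs at least weight $6$ (two on $v$ and two full side-pairs in other copies). Thus a solvable weight-$5$ configuration would need a pebble in each of the $n$ copies; for $n\ge 6$ this already exceeds $5$, and the borderline cases $n=4,5$ collapse once one notes that the single pebble a copy can afford must then sit on $b_i$, leaving its side vertices $a_i,c_i$ unreachable. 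Finally, the exact value $\pi^{*}_{2}(F_{3,4})=5$ I would certify through Theorem~\ref{**}, checking that $\gamma_{t}(F_{3,4})=4$, that no set of the form $\{u,w\}\cup(N(u)\cap N(w))$ dominates $F_{3,4}$, and that condition~(ii) of that theorem is met.

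The genuine difficulty lies entirely in the lower bounds. The weight inequality is clean but only decisive for large $n$, so the small counts ($n=2,3$ in part~(ii) and $n=4,5$ in part~(iii)) must be closed by the structural argument that every copy needs its own pebble, reinforced with explicit bookkeeping on the side vertices. Pinning down the routing claim, namely that the $2$-per-vertex cap makes four pebbles on $v$ unattainable below weight $6$, is the technical core. One must also keep in mind that solvability is checked target by target from the same starting configuration, so the reachability analysis has to be carried out for each $b_i$ separately rather than through a single global potential.
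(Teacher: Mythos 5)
Your proposal is correct, and on the parts that carry real content it takes a genuinely different route from the paper. Part (i) coincides: both reduce to Theorem \ref{*} with $k=2$. The upper-bound configurations also match in spirit (the paper exhibits them via Figure \ref{f5}, you write them out explicitly and verify them), and both you and the paper settle $n=3$ in part (iii) by the characterization in Theorem \ref{**}. The divergence is in the lower bounds. For (ii), the paper simply asserts that no configuration of size $3$ is solvable; you actually prove it, via the distance-weighted potential $\sum_u f(u)2^{-d(u,r)}\geqslant 1$ summed over the $n$ antipodal targets $b_i$ (your coefficients $c_v=n/4$, $c_{a_i}=c_{c_i}=(n+3)/8$, $c_{b_i}=(n+15)/16$ are right, and $3\max_u c_u<n$ does hold exactly for $n\geqslant 4$), plus a short case analysis for $n=2,3$ resting on the fact that single pebbles cannot move. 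For (iii) with $n\geqslant 4$, the paper invokes Theorem \ref{**} to conclude $\pi^{*}_{2}(F_{n,4})>5$ --- which, strictly speaking, only excludes the value $5$, leaving the exclusion of $4$ implicit --- whereas your routing argument (a target $b_i$ in a pebble-free copy forces four pebbles at $v$, hence weight at least $6$; otherwise every copy needs its own pebble, and those lone pebbles must sit on the $b_i$ and then cannot move) shows directly that every $2$-restricted configuration of weight at most $5$ is unsolvable, excluding $4$ and $5$ simultaneously. So your version is more self-contained and arguably closes a small logical gap in the paper, at the cost of length; the paper's version is shorter because it leans on the external characterization and on figures. The only caveat is that your borderline cases ($n=2,3$ in (ii); $n=4,5$ in (iii)) are sketched rather than written out, but the sketches are sound and complete routinely.
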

\begin{proof}
	\begin{itemize}
		\item[(i)]
		The result follows from Theorem \ref{*}.
		\item[(ii)]
		Let $ f $ be a configuration that put  $ 4 $ pebbles on the common vertex of the graph $ F_{n,4} $. Therefore $ \pi^{*}(F_{n,4})\leqslant 4 $. Since $ \pi^*(C_{4})=3 $ and there is no a solvable configuration of size $ 3 $, so $ \pi^{*}(F_{n,4})=4$.  
		\item[iii)]
		Since for any graph $G$, $ \pi^{*}(G)\leqslant\pi^{*}_{2}(G)$, so $ \pi^{*}_2(F_{n,4})\geqslant 4 $. Now consider the solvable configuration $ f $ of size $ 4 $ for the  graph $ F_{2,4} $  as shown in  Figure \ref{f5}. So $ \pi^{*}_{2}(F_{2,4})=4 $.
		For $ n=3 $, the result follows from Theorem \ref{**}. For $ n>3 $, we have $ \pi^{*}_{2}(F_{n,4})>5 $ by Theorem \ref{**}. Consider the solvable configuration of size $ 6 $ that has shown  in Figure \ref{f5}, so $ \pi^{*}_{2}(F_{n,4})=6 $, $ (n>3) $. \qed
	\end{itemize}
\end{proof}

\begin{figure}[ht]
	\centering
	\includegraphics[scale=.6]{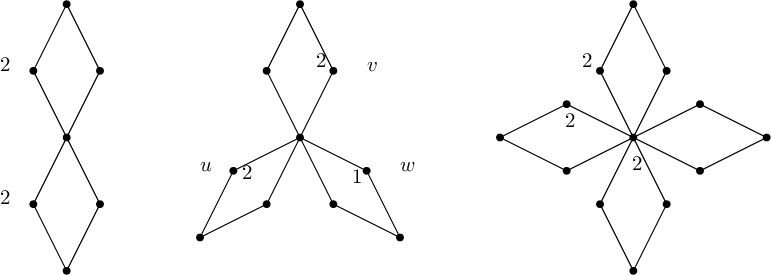}
	\caption{The friendship graphs $ F_{2,4} $, $ F_{3,4} $ and $ F_{4,4} $}\label{f5}
\end{figure}

We close this section by the following questions: 

\medskip
\noindent {\bf Question 1:} Compute the  optimal pebbling and $ 2 $-restricted optimal pebbling number of graph $ F_{n,2k} $, for $ k>2 $.

\medskip
\noindent {\bf Question 2:} Compute  $ \pi(F_{n,2k+1}), \pi^*(F_{n,2k+1})$, and $\pi^*_2(F_{n,2k+1})$.

\section{Pebbling number of some cactus chains}

In this section, we obtain the pebbling number  of families of graphs that are of importance in chemistry. These graphs are cactus graphs. 
A cactus is a connected graph in which any two simple cycles have at most one vertex in common. Equivalently, it is a connected graph in which every edge belongs to at most one simple cycle, or in which every block (maximal subgraph without a cut-vertex) is an edge or a cycle. If all blocks of a cactus $ G $ are cycles of the same size $ i $, the cactus is $ i $-uniform. A triangular cactus is a graph whose blocks are triangles, i.e., a $ 3 $-uniform cactus. A vertex shared by two or more triangles is called a cut-vertex. If each triangle of a triangular cactus $ G $ has at most two cut-vertices, and each cut-vertex is shared by exactly two triangles, we say that $ G $ is a chain triangular cactus. By replacing triangles in this definition by cycles of length $ 4 $ we obtain cacti whose every block is $ C_{4} $ that are called square cacti. Note that the internal squares may differ in the way they connect to their neighbors. If their cut-vertices are adjacent, we say that such a square is an ortho-square; if the cut-vertices are not adjacent, we call the square a para-square \cite{Alikhani2}.

We need recall the maximum $r$-path partition of a tree. 
Suppose that $ \mathcal{P}=\lbrace P^{1},...,P^{m}\rbrace $ is a path partition of a tree $ T $, with each $ P^{i} $ having length $ \mathit{l}_{i} $ written in non-increasing order. We say that $ \mathcal{P} $ is an $ r $-path partition if $ r $ is an endpoint of $ P^{1} $ and any other $ P^{i} $ that contains it, and that $ \mathcal{P} $ majorizes another $ r $-path partition $ \mathcal{P}' $ if there is some $ j $ such that $ \mathit{l}_{i}=\mathit{l}'_{i} $ for all $ i<j $ and $ \mathit{l}_{j}>\mathit{l}'_{j} $ . If $ \mathcal{P} $ majorizes every $ r $-path partition, then it is maximum. Chung \rm\cite{Chung} proved the following theorem.

\begin{theorem}{\rm\cite{Chung}}
	If $ \mathit{l}_{1},...,\mathit{l}_{m} $ are the path lengths of a maximum $ r $-path partition of a tree $ T $, then $ \pi_{t}(T,r)={\displaystyle t2^{\mathit{l}_{1}}+ \sum_{i=2}^{m}2^{\mathit{l}_{i}} -m +1}$.
\end{theorem}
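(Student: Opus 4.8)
The plan is to prove the identity by establishing matching lower and upper bounds, writing $B(T,r,t)=t2^{l_1}+\sum_{i=2}^{m}2^{l_i}-m+1$ for the claimed value. A preliminary structural observation I would record first is that, in a maximum $r$-path partition, the endpoint $u_i$ of each $P^i$ farthest from $r$ is a leaf of $T$; otherwise one could reroute the partition to lengthen a path and so contradict maximality. I would also note the reduction identity: deleting such a leaf $u_i$ (equivalently, shortening $P^i$ by its last edge) yields a tree $T'$ whose path lengths are $l_1,\dots,l_i-1,\dots,l_m$, still in non-increasing order, still a maximum $r$-path partition of $T'$, and satisfying $B(T',r,t)=B(T,r,t)-2^{l_i-1}$ (the case $l_i=1$, where $P^i$ disappears and $m$ drops by one, gives $-2^{0}=-1$, consistently). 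The base cases are the single vertex ($\pi_t=t$) and the single path $P^1$ of length $l_1$ (for which $\pi_t=t2^{l_1}$), both matching $B$.

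For the lower bound I would exhibit the configuration $g$ placing $t2^{l_1}-1$ pebbles on $u_1$ and $2^{l_i}-1$ pebbles on each $u_i$ with $i\ge 2$, so that $|g|=B(T,r,t)-1$, and show it is not $t$-fold $r$-solvable by induction using the leaf $u=u_m$ at the end of the shortest path. Since $g(u)=2^{l_m}-1<2^{l_m}$, and since no move \emph{into} the leaf $u$ can ever help reach $r$, any hypothetical solving sequence on $T$ may be assumed to push $\lfloor g(u)/2\rfloor=2^{l_m-1}-1$ pebbles from $u$ onto its neighbour and otherwise act entirely within $T'=T-u$. The induced configuration on $T'$ is then \emph{exactly} the critical configuration $g'$ for $T'$, namely $2^{l_m-1}-1$ pebbles on the new endpoint of the shortened $P^m$ and all other endpoints unchanged; by the induction hypothesis $g'$ is $r$-unsolvable, a contradiction. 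Hence $\pi_t(T,r)\ge B(T,r,t)$.

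For the upper bound I would take any configuration $f$ with $|f|=B(T,r,t)$ and again induct by stripping the shortest path $P^m$. The easy case is $f(u_m)\le 2^{l_m}-1$: collapsing $u_m$ onto its neighbour produces $f'$ on $T'$ with $|f'|=|f|-\lceil f(u_m)/2\rceil\ge |f|-2^{l_m-1}=B(T',r,t)$, so $f'$ is $t$-fold $r$-solvable by induction and the moves lift back to $T$. The delicate case, which I expect to be the main obstacle, is when $P^m$ is heavily loaded (in particular $f(u_m)\ge 2^{l_m}$): here the naive collapse loses too much weight to keep $|f'|\ge B(T',r,t)$. I would instead contract the entire path $P^m$ onto its branch vertex $w_m$, using the single-path pebbling bound to count how many pebbles $P^m$ can deliver to $w_m$, and feed these into the configuration on the reduced tree $T''$ with partition $l_1,\dots,l_{m-1}$. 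Making this accounting close requires a split between ``concentrated'' configurations, where a heavily loaded $P^m$ leaves a large surplus at $w_m$ that is comfortably solvable on $T''$, and ``spread'' ones, where the loss is provably at most $2^{l_m}-1$, together with a careful check that the reduced partition stays maximum. This bookkeeping across cases, rather than any single clever pebbling move, is where the real work lies.
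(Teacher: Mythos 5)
The paper states this result only as a citation to Chung \cite{Chung} and supplies no proof of its own, so there is nothing internal to compare against; I am judging your argument on its own terms. Your lower bound is correct and cleanly organized: the configuration with $t2^{l_1}-1$ pebbles on $u_1$ and $2^{l_i}-1$ on each other $u_i$ has size $B(T,r,t)-1$, and the leaf-stripping induction (moves into a non-root leaf are useless, so at most $\lfloor g(u_m)/2\rfloor=2^{l_m-1}-1$ pebbles escape $u_m$, reproducing exactly the critical configuration on $T-u_m$) is the standard and correct way to show it is not $t$-fold $r$-solvable. The structural facts you defer --- that each $u_i$ is a leaf, and that deleting $u_m$ leaves $l_1,\dots,l_m-1$ as a maximum partition of $T-u_m$ --- are true, though the second genuinely needs an argument (the longest path from $r$ is untouched since $u_m\notin P^1$, and one recurses); without it the induction hypothesis does not apply to the reduced tree.

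The gap is in the upper bound, and you have located it yourself but not closed it. When $f(u_m)\le 2^{l_m}$ your collapse works, since $|f'|\ge B(T,r,t)-2^{l_m-1}=B(T',r,t)$. But in the heavily loaded case the accounting you propose provably fails in the form stated. Concretely, put $q=2^{l_m}+1$ pebbles on $u_m$ and the remaining $B(T,r,t)-2^{l_m}-1$ pebbles anywhere on $T''=T-(P^m-w_m)$: contracting $P^m$ delivers exactly one pebble to $w_m$, so the induced configuration on $T''$ has size $(B(T,r,t)-q)+1=B(T'',r,t)-1$, one below the threshold at which the induction hypothesis applies. This is neither a ``large surplus at $w_m$ that is comfortably solvable'' nor a loss bounded by $2^{l_m}-1$; it sits exactly in the crack between your two cases. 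The theorem is of course still true for such configurations, but proving it requires information beyond the size of the induced configuration --- one must exploit where the delivered pebbles land, or strengthen the induction hypothesis, which is precisely the finer bookkeeping carried out in Chung's original argument and in the weight-function treatments of Moews and Hurlbert. As written, your proposal establishes the lower bound and only the easy half of the upper bound.
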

\begin{corollary}{\rm\cite{Bunde}}
	If the length list of an optimal path partition of tree $ T $ is $ \mathit{l}_{1},...,\mathit{l}_{m} $, then
	$ \pi(T)={\displaystyle\sum_{i=1}^{m}2^{\mathit{l}_{i}} -m +1} $
\end{corollary}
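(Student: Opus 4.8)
The plan is to obtain this corollary as the $t=1$ specialization of the preceding theorem of Chung, combined with the definition $\pi(T)=\max_{r\in V(T)}\pi(T,r)$. First I would set $t=1$ in Chung's formula: for a fixed root $r$ whose maximum $r$-path partition has lengths $\mathit{l}_1,\dots,\mathit{l}_m$, the leading term $t2^{\mathit{l}_1}$ collapses to $2^{\mathit{l}_1}$, so that
\[
\pi(T,r)=\pi_1(T,r)=2^{\mathit{l}_1}+\sum_{i=2}^{m}2^{\mathit{l}_i}-m+1=\sum_{i=1}^{m}2^{\mathit{l}_i}-m+1 .
\]
Thus for every choice of root the one-fold pebbling number is already in the desired closed form, and it only remains to maximize this expression over $r$.

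Next I would introduce the notion of an \emph{optimal path partition}: a path partition of $T$ arising as a maximum $r$-path partition for some root $r$ whose length list $\mathit{l}_1,\dots,\mathit{l}_m$ maximizes $\sum_{i=1}^{m}2^{\mathit{l}_i}-m+1$ over all roots. With this definition, the identity $\pi(T)=\max_{r}\pi(T,r)$ together with the displayed formula yields $\pi(T)=\sum_{i=1}^{m}2^{\mathit{l}_i}-m+1$ evaluated on the optimal path partition, which is exactly the assertion. The conceptual reason the maximization is captured correctly is that, for a fixed $r$, the total edge count $\sum_i \mathit{l}_i=|V(T)|-1$ is invariant across $r$-path partitions while $x\mapsto 2^x$ is convex, so concentrating length into fewer, longer paths never decreases the weighted sum; this is precisely why Chung's theorem is phrased in terms of the \emph{maximum} (majorization-maximal) $r$-path partition, so for the corollary it may simply be invoked rather than reproved. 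Ranging over all roots and selecting the global maximizer then produces the optimal path partition.

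The main obstacle I anticipate is bookkeeping rather than conceptual. One must confirm that $\sum_i 2^{\mathit{l}_i}-m+1$ is well defined independently of which maximum $r$-path partition is chosen for a given $r$ — all such partitions share the same length list, so the value is unambiguous — and one must check that passing to the best root behaves correctly with the $-m+1$ term, since the number of parts $m$ itself varies with the partition and with $r$. Once these points are settled, the corollary follows immediately from Chung's theorem evaluated at $t=1$.
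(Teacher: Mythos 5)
Your proposal is correct and matches the intended derivation: the paper offers no separate proof, presenting the statement as the immediate $t=1$ specialization of Chung's theorem followed by maximizing $\pi(T,r)$ over all roots $r$, which is exactly what you do. Your additional remarks on convexity and the well-definedness of the length list are fine but not needed beyond invoking Chung's theorem and the definition of an optimal path partition as the root-maximizing one.
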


\begin{figure}[ht]
	\centering
	\includegraphics[scale=.76]{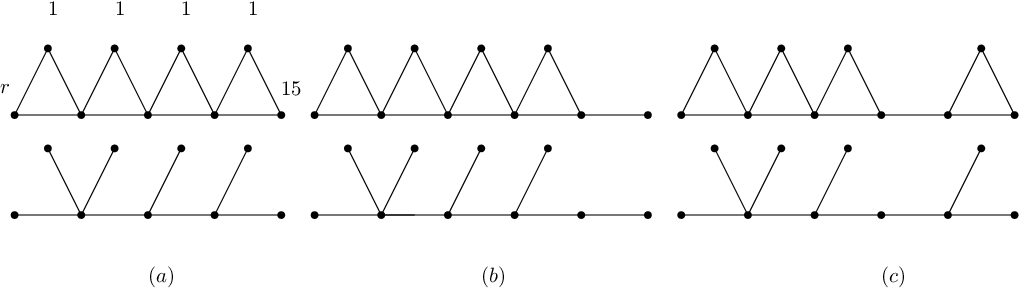}
	\caption{(a) An unsolvable configuration $ f $ of size $ 19 $ and a spanning tree of graph $ T_{4} $, (b) Graph $ T_{4}+e $ and an its spanning tree, (c)  Graph $ T_{4}+e' $ and an its spanning tree.}\label{f1}
\end{figure}

Here, we consider triangular cactus and call the number of triangle, the length of the chain.  Obviously, all chain triangular cacti of the same length are isomorphic. Hence, we denote the chain triangular cactus of length $n$ by $T_n$. The following theorem gives the pebbling number of $T_n$.

\begin{theorem}
	$ \pi(T_{n})=2^{n}+n $.
\end{theorem}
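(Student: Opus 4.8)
The plan is to establish the two inequalities $\pi(T_n)\ge 2^n+n$ and $\pi(T_n)\le 2^n+n$ separately. First I fix notation: write the spine as $v_0v_1\cdots v_n$ (so that $v_1,\dots,v_{n-1}$ are the cut-vertices), and let $w_i$ be the apex of the $i$-th triangle, adjacent to $v_{i-1}$ and $v_i$. Then $V(T_n)=\{v_0,\dots,v_n,w_1,\dots,w_n\}$ has $2n+1$ elements, $d(v_0,v_n)=d(v_0,w_n)=n$, and $\mathrm{diam}(T_n)=n$.

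For the lower bound I would take the target $r=v_0$ and the configuration $f$ defined by $f(v_n)=2^n-1$ and $f(w_i)=1$ for every $1\le i\le n$, so that $|f|=(2^n-1)+n=2^n+n-1$. The goal is to show $f$ is $v_0$-unsolvable, which I would do by induction on $n$ using the cut-vertex $v_{n-1}$. The base case $n=1$ is immediate, since then $T_1$ is a triangle carrying one pebble on $v_1$ and one on $w_1$, neither of which can move. For the inductive step, the key estimate is that the block $\{v_{n-1},v_n,w_n\}$ can deliver at most $2^{n-1}-1$ pebbles to $v_{n-1}$: if $s$ moves are made from $v_n$ into $w_n$, then $v_{n-1}$ receives at most $(2^{n-1}-1-s)+\lfloor (1+s)/2\rfloor\le 2^{n-1}-1$, so the lone apex pebble on $w_n$ is never profitable. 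Since $v_{n-1}$ separates this block from the rest of $T_n$, I may assume an optimal play first pushes everything it will ever push across $v_{n-1}$; the residual configuration on $T_{n-1}$ is then dominated pointwise by the analogous configuration $f_{n-1}$ (at most $2^{n-1}-1$ on $v_{n-1}$ and a single pebble on each remaining apex), which is $v_0$-unsolvable by the inductive hypothesis. Hence no pebble reaches $v_0$, giving $\pi(T_n,v_0)\ge 2^n+n$.

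For the upper bound I would use the subgraph bound $\pi(T_n,r)\le\pi(T,r)$ together with Chung's tree formula. For a given target $r$, choose the spanning tree $T$ obtained by keeping the entire spine and deleting from each triangle the apex edge joining $w_i$ to whichever of $v_{i-1},v_i$ is farther from $r$; this makes $T$ a spine path with $n$ pendant apexes and, crucially, creates no path from $r$ longer than $\mathrm{ecc}(r)$. For $r=v_0$ the maximum $r$-path partition consists of one path of length $n$ together with $n$ paths of length $1$, so Chung's theorem yields $\pi(T,v_0)=2^n+n\cdot 2-(n+1)+1=2^n+n$. For an interior spine target the two spine arms from $r$ have lengths $a$ and $n-a$ with $1\le a\le n-1$, and the same computation gives $2^a+2^{n-a}+n-1\le 2^{n-1}+2+n-1<2^n+n$; the apex targets are handled identically, with $w_1$ and $w_n$ attaining $2^n+n$ and the remaining apices giving strictly less. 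Thus $\pi(T_n,r)\le\pi(T,r)\le 2^n+n$ for every $r$, with equality exactly at the four extreme vertices $v_0,v_n,w_1,w_n$, and therefore $\pi(T_n)=\max_r\pi(T_n,r)\le 2^n+n$. Combining the two bounds gives $\pi(T_n)=2^n+n$.

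The step I expect to be the main obstacle is the rigorous unsolvability of $f$. The natural distance-based weight function fails here, because the apex pebbles close to $v_0$ (for instance the one on $w_1$, at distance $1$) carry large weight and push the total potential well above $1$, even though they can never actually be set in motion. The substantive content is precisely that these apex pebbles can never be activated despite arbitrarily clever, interleaved move sequences; my intended remedy is the cut-vertex reduction sketched above, which localises each apex's one pebble to a single triangle and lets the induction absorb it cleanly.
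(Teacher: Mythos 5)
Your proof is correct and takes essentially the same approach as the paper: the upper bound via the subgraph bound applied to a spanning tree whose pebbling number, computed by Chung's tree formula, equals $2^{n}+n$, and the lower bound via the unsolvable configuration of size $2^{n}+n-1$ consisting of $2^{n}-1$ pebbles on the far end of the spine plus one pebble on each apex. The paper merely points to this configuration in a figure without proving unsolvability, so your inductive cut-vertex argument (and your per-root choice of spanning tree) simply fills in details the paper leaves implicit.
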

\begin{proof}
	By subgraph bound, we have $ \pi(T_{n})\leqslant\pi(S_{n}) $, where $ S_{n} $ is a spanning tree (with minimum eccentric) of a triangular chain cactus with $ n $ triangles, so $ \pi(T_{n})\leqslant 2^{n}+n  $. According to Figure \ref{f1} $ (a) $, one can consider the unsolvable configuration $ f $ of size $ 2^{n}+n-1 $. Therefore we have the result.\qed
\end{proof}

\begin{corollary}
	$ \pi(T_{n}+e)=2^{n+1}+n $.
\end{corollary}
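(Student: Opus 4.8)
The plan is to mirror the proof of the preceding theorem, sandwiching $\pi(T_n+e)$ between matching upper and lower bounds. The leading term doubling from $2^n$ to $2^{n+1}$ signals that the effect of $e$ is to push the diameter from $n$ up to $n+1$ while leaving the $n$ pendant apices of the triangles intact, so the whole argument should be a near-verbatim repetition of the $T_n$ case with one extra length unit on the longest path.

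For the upper bound I would invoke the subgraph bound: fix a spanning tree $S$ of $T_n+e$ of minimum eccentricity, so that $\pi(T_n+e,r)\leqslant\pi(S,r)$ for every root $r$ and hence $\pi(T_n+e)\leqslant\pi(S)$. The key step is to read off the maximum $r$-path partition of $S$. The backbone of the chain together with the extra edge $e$ forms a single path of length $n+1$, while the apex of each of the $n$ triangles contributes a pendant path of length $1$. Thus the length list is $l_1=n+1$ and $l_2=\cdots=l_{n+1}=1$ with $m=n+1$, and the tree formula of Bunde et al. \cite{Bunde} gives $\pi(S)=2^{n+1}+n\cdot 2^{1}-(n+1)+1=2^{n+1}+n$, which is the desired upper bound.

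For the lower bound I would exhibit an $r$-unsolvable configuration $f$ of size $2^{n+1}+n-1$, as indicated in Figure \ref{f1}(b). Take $r$ to be the endpoint of $e$ realizing the diameter $n+1$, place $2^{n+1}-1$ pebbles on the vertex at distance $n+1$ from $r$, and place a single pebble on each of the $n$ triangle apices; then $\vert f\vert=(2^{n+1}-1)+n=2^{n+1}+n-1$. The claim is that $f$ is not $r$-solvable: the large pile is exactly one pebble short of the $2^{n+1}$ needed to traverse the distance-$(n+1)$ geodesic to $r$, and the $n$ scattered single pebbles can never be set in motion on their own (each apex carries only one pebble and is at distance $1$ from the backbone).

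The main obstacle will be making the unsolvability argument rigorous, since $T_n+e$ is not a tree and its triangles could in principle offer alternative routes or let the stray apex pebbles be topped up to two and fed toward $r$. I would close this gap either by a weight-function (potential) argument bounding the number of pebbles reachable at $r$, or by observing that no triangle provides a shortcut shortening the distance to $r$, so that routing a pebble from the big pile onto an occupied apex and back to the backbone is strictly lossy compared with carrying it straight through; in both views the best one can do is fall one pebble short at $r$. Combining the two bounds yields $\pi(T_n+e)=2^{n+1}+n$.
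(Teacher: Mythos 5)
Your proof follows exactly the route the paper intends: the paper offers no explicit argument for this corollary beyond displaying $T_n+e$ and its spanning tree in Figure~\ref{f1}(b), so the intended proof is precisely your combination of the subgraph bound with the tree path-partition formula (giving the upper bound $2^{n+1}+n\cdot 2-(n+1)+1=2^{n+1}+n$) and an $r$-unsolvable configuration of size $2^{n+1}+n-1$ for the lower bound. Your unsolvability claim is asserted at essentially the same level of rigor as the paper's own treatment of $T_n$ (which likewise only exhibits the configuration in Figure~\ref{f1}(a)); the only small caveat is that a detour through an occupied apex is break-even in cost rather than \emph{strictly} lossy, which is exactly why the stray pebbles give no gain and the big pile falls one short.
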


To obtain more results for square cactus chains we need the following preliminaries. 
If  $ P_{n} $ is the path $ v_{1}v_{2}...v_{n} $ on $ n $ vertices, then it is easy to see that $ \pi(P_{n})=2^{n-1} $.  
Now define $ w $ on $ V(P_{n}) $ by $ w(v_{n-i})=2^{i} $, and extend the weight function to configurations by $ w(f)=\sum_{v\in V}w(v)f(v) $. Then a pebbling step can only preserve or decrease the weight of a configuration. Since the weight of a configuration with a pebble on $ v_{1} $ is at least $ 2^{n-1} $, we see that $ 2^{n-1} $ is a lower bound on every $ v_{1} $-solvable configuration. In fact, induction shows that every $ v_{1} $-unsolvable configuration has weight at most $ 2^{n-1}-1 $, which equals $ \sum_{i=2}^{n}w(v_{i}) $.

Let $ G $ be a graph and $ T $ be a subtree of $ G $ rooted at vertex $ r $, with at least two vertices. For a vertex $v\in V(T)$ let $ v^{+} $ denote the parent of $ v $; i.e. the $T$-neighbor of $ v $ that is one step closer to $ r $ (we also say that $ v $ is a child of $ v^{+} $). The tree $ T $ is called a \textit{strategy} when we associate with it a nonnegative, nonzero weight function $ w $ with the property that $ w(r) = 0 $ and $ w(v^{+}) = 2w(v) $ for every other vertex that is not a neighbor of $ r $ (and $ w(v) = 0 $ for vertices not in $ T $). Let $ \textbf{T} $ be the configuration with $ \textbf{T}(r) = 0, \textbf{T}(v) = 1 $ for all $ v\in V (T) $, and $ \textbf{T}(v) = 0 $ everywhere else.

\begin{lemma}{\rm\cite{Hurlbert} [Weight Function Lemma]}\label{wfl} 
	Let $ T $ be a strategy of $ G $ rooted at $ r $, with associated weight function $ w $. Suppose that $ f $ is an $ r $-unsolvable configuration of pebbles on $ V(G) $. Then $ w(f)\leqslant w(\textbf{T}) $.
\end{lemma}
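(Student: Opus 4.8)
The plan is to prove the bound by a weight-preserving \emph{collapse} of the pebbles up the tree $T$ toward the root, exploiting the defining relation $w(v^{+})=2w(v)$ of a strategy. First I would record the essential consequence of $r$-unsolvability that $f(r)=0$, since a pebble already sitting on $r$ would solve $r$ immediately. The central computation is to track how the weight $w(f)=\sum_{v}w(v)f(v)$ changes under a single pebbling step from $u$ to a neighbour $x$: the weight changes by exactly $w(x)-2w(u)$. Applying this to a move from a vertex $v$ to its tree-parent $v^{+}$, the strategy condition $w(v^{+})=2w(v)$ makes the change equal to zero whenever $v$ is not a neighbour of $r$, whereas a move onto $r$ itself strictly decreases the weight because $w(r)=0$.

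Armed with this, I would define the collapse process: repeatedly choose any vertex $v\in V(T)\setminus\{r\}$ that is not a neighbour of $r$ and carries at least two pebbles, and move two of its pebbles to $v^{+}$. Every such move is legal (as $v^{+}$ is a $T$-neighbour of $v$ in $G$), preserves the weight, and strictly decreases the total number of pebbles (two removed, one added); hence the process halts after finitely many steps at a configuration $f'$ reachable from $f$ with $w(f')=w(f)$. By the termination condition, every non-root, non-$r$-neighbour vertex of $T$ then carries at most one pebble.

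It remains to control the pebbles on $r$ and on the neighbours of $r$ in $f'$, and this is where $r$-unsolvability does the real work. Since $f'$ is reachable from the $r$-unsolvable $f$, it is itself $r$-unsolvable; thus $f'(r)=0$, and moreover each neighbour $v$ of $r$ must satisfy $f'(v)\le 1$, for otherwise the single move sending two pebbles from $v$ to $r$ would solve $r$. Combining these facts, every vertex of $V(T)\setminus\{r\}$ holds at most one pebble under $f'$, while vertices outside $T$ contribute nothing since $w$ vanishes there. Therefore $w(f)=w(f')=\sum_{v\in V(T)\setminus\{r\}}w(v)f'(v)\le\sum_{v\in V(T)\setminus\{r\}}w(v)=w(\textbf{T})$, as desired.

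The step I expect to be the main obstacle is getting the direction of the inequality right: a naive collapse that also pushes pebbles onto $r$ only yields $w(f)\ge w(f')$, which is the wrong direction and useless here. The resolution is to keep the collapse \emph{strictly} weight-preserving by never performing a move onto $r$, and then to invoke unsolvability a second time, at the terminal configuration, to guarantee that no neighbour of $r$ is left holding two pebbles. Care must also be taken to justify termination, which follows from the monotone decrease of the total pebble count under each move, and to note that $r$-unsolvability is inherited by every configuration reachable from $f$, since a solving sequence for $f'$ would compose with the collapse into a solving sequence for $f$.
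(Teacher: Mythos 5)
The paper does not prove this lemma; it is quoted from Hurlbert's paper \cite{Hurlbert}, so there is no in-text proof to compare against. Your argument is correct and is essentially Hurlbert's original one: push excess pebbles up the tree using the doubling relation $w(v^{+})=2w(v)$ so that the weight is preserved, observe that the process terminates because each move loses a pebble, and then use $r$-unsolvability of the terminal (still reachable, hence still unsolvable) configuration to force at most one pebble on each vertex of $V(T)\setminus\{r\}$, including the neighbours of $r$ where the doubling relation is not available. All the delicate points --- never moving onto $r$ during the collapse, inheritance of unsolvability under reachability, and nonnegativity of $w$ in the final comparison with $w(\textbf{T})$ --- are handled correctly.
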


For a graph $ G $ and root vertex $ r $, let $ \mathbb{T} $ be the set of all $ r $-strategies in $ G $, and denote by $ z^*_{G,r} $ the optimal value of the integer linear optimization problem $ \textbf{P}_{G,r} $:
\begin{center}
	Max. $ \sum_{v\neq r}f(v) $ s.t. $ w(f)\leqslant w(\textbf{T}) $, and $ \textbf{T}\in\mathbb{T} $ with witnessing weight function $ w $.
\end{center}

\begin{corollary}{\rm\cite{Hurlbert}}
	Every graph $ G $ and root $ r $ satisfies $ \pi(G,r)\leqslant  z^*_{G,r} +1$. 
\end{corollary}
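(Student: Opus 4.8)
The plan is to show that every $r$-unsolvable configuration is a feasible point of the integer program $\textbf{P}_{G,r}$, so that its size is bounded above by the optimum $z^*_{G,r}$; the claimed bound on $\pi(G,r)$ then follows immediately from the characterization of the pebbling number as one more than the largest unsolvable configuration. In essence, the Weight Function Lemma does all the work, and the corollary is the observation that an unsolvable configuration is exactly a configuration that respects every weight-function constraint at once.

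First I would record the standard reformulation $\pi(G,r) = 1 + \max\{\,|f| : f \text{ is } r\text{-unsolvable}\,\}$. This rests on the monotonicity of solvability: if $f \leqslant f'$ pointwise and $f$ is $r$-solvable, then so is $f'$, since the extra pebbles in $f'$ need never be moved. Consequently, once every configuration of a given size is $r$-solvable, so is every larger one, and $\pi(G,r)$ is precisely one more than the maximum size of an $r$-unsolvable configuration. Establishing this equivalence cleanly is the one point that requires care, and I regard it as the main (if modest) obstacle, since the rest of the argument is a direct appeal to Lemma \ref{wfl}.

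Next I would take an arbitrary $r$-unsolvable configuration $f$ and bound $|f|$. Since a pebble already resting on the target $r$ solves it trivially, unsolvability forces $f(r)=0$, and therefore $\sum_{v\neq r} f(v) = |f|$. Now fix any strategy $\textbf{T}\in\mathbb{T}$ with its witnessing weight function $w$. The Weight Function Lemma applied to the unsolvable $f$ gives $w(f)\leqslant w(\textbf{T})$. As $\textbf{T}$ was arbitrary, $f$ satisfies every constraint of $\textbf{P}_{G,r}$, so $f$ is a feasible point of that integer optimization problem. Its objective value is $\sum_{v\neq r} f(v) = |f|$, which is therefore at most the optimal value $z^*_{G,r}$.

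Combining the two steps, every $r$-unsolvable configuration has size at most $z^*_{G,r}$, hence the largest unsolvable configuration has size at most $z^*_{G,r}$, and the reformulation of $\pi$ yields $\pi(G,r)\leqslant z^*_{G,r}+1$. I expect no computational difficulty here: because configurations are integer-valued and $\textbf{P}_{G,r}$ is itself an integer program, there is no gap between integer feasibility of $f$ and the integer optimum, so the feasibility argument passes through without rounding issues.
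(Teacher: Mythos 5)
Your proposal is correct and follows essentially the same route as the paper: apply the Weight Function Lemma to every strategy to show that any $r$-unsolvable configuration is feasible for $\textbf{P}_{G,r}$, hence has size at most $z^*_{G,r}$, and conclude via the characterization of $\pi(G,r)$ as one more than the maximum size of an $r$-unsolvable configuration. Your added care about monotonicity of solvability and the observation that $f(r)=0$ (so the objective $\sum_{v\neq r}f(v)$ equals $|f|$) are sensible details the paper leaves implicit.
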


Let explain more about Lemma \ref{wfl}.  Suppose that $ T $ is a tree with target vertex $r$ which is  one of its leaves, and  $ ecc_{T}(r)=d $. Define the weight function $ w_T $ by $ w_T(v)=2^{d-i} $, where  $ i=dist_T(v,r) $. The pair $ (T,w_T) $ is called a basic strategy.
In fact, the Weight Function Lemma says that if $ f $ is an $ r $-unsolvable configuration on $ T $, then
\[\sum_{v\in T}w_T(v)f(v)\leqslant\sum_{v\in T}w_T(v). \]
One can extend this to all graphs as follows. Given a target $ r $ in a graph $ G $, consider any tree $ T $ in $ G $ with $ r $  as a leaf. Extend $ w_T $ to all of $ G $ by setting $ w_T(v)=0 $ when $ v\notin T $. So the Weight Function Lemma still holds.
The collection of all inequalities from basic strategies gives rise to an integer optimization problem by maximizing $ z_{G,r}=\sum_{v\in G}f(v) $ (the size of $ f $) over these constraints. If $ z^*_{G,r} $ is the optimum value, then it shows that every $ r $-unsolvable configuration has size at most $ z^*_{G,r} $; in other words, $ \pi(G,r)\leqslant  z^*_{G,r} +1$.

\begin{figure}[ht]
	\centering
	\includegraphics[scale=.9]{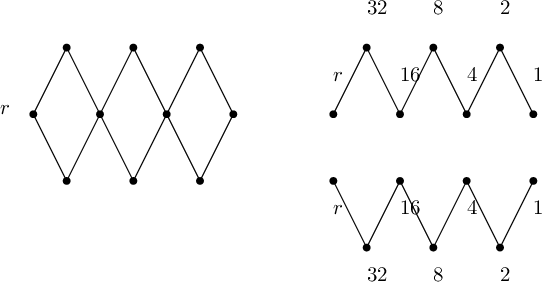}
	\caption{The tree constraints of $ Q_{3}$. }\label{f3}
\end{figure}

\begin{theorem}
	If $ Q_{n} $ is a para-chain cactus with $ n $ squares, then $ \pi(Q_{n},r)=2^{2n}$.
\end{theorem}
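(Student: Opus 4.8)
The plan is to prove matching lower and upper bounds, where the upper bound comes from the Weight Function Lemma (Lemma \ref{wfl}) rather than from the weaker spanning-tree/subgraph bound. First I fix notation: write the cut-vertices of the chain as $u_0,u_1,\dots,u_n$, where $u_{i-1}$ and $u_i$ are the two corners of the $i$-th square (non-adjacent, since the squares are para), and let $a_i,b_i$ be the other two corners of that square. I take the root to be the end cut-vertex $r=u_0$, which is the vertex achieving eccentricity $2n$ (matching $2^{2n}$). Then $\mathrm{dist}(u_i,r)=2i$ and $\mathrm{dist}(a_i,r)=\mathrm{dist}(b_i,r)=2i-1$, so $ecc(r)=\mathrm{dist}(u_n,r)=2n$.

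For the lower bound I place $2^{2n}-1$ pebbles on $u_n$ and none elsewhere. Since all pebbles sit on a single vertex at distance $2n$ from $r$, at most $\lfloor (2^{2n}-1)/2^{2n}\rfloor=0$ pebbles can be moved to $r$; this configuration is $r$-unsolvable of size $2^{2n}-1$, so $\pi(Q_n,r)\geq 2^{2n}$ (equivalently, the standard bound $\pi(Q_n,r)\geq 2^{ecc(r)}$).

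For the upper bound I will use $\pi(Q_n,r)\leq z^*_{Q_n,r}+1$, and the real content is to exhibit a fractional combination of basic strategies whose total equals exactly $2^{2n}-1$; note that a single spanning tree yields only the weaker value $2^{2n}+n$, because each square leaves one pendant vertex. I take just two basic strategies: the spine path $T_1=u_0a_1u_1a_2\cdots a_nu_n$ through the $a$-corners, and $T_2=u_0b_1u_1b_2\cdots b_nu_n$ through the $b$-corners, each carrying its basic weight $w_j(v)=2^{2n-\mathrm{dist}(v,r)}$. Each $T_j$ is a path of length $2n$ from the leaf $r$, so $w_j(\mathbf{T}_j)=\sum_{k=1}^{2n}2^{2n-k}=2^{2n}-1$. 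Assigning multiplier $\tfrac12$ to each strategy, the combined weight on $a_i$ (resp.\ $b_i$) is $\tfrac12\cdot 2^{2n-2i+1}=2^{2n-2i}\geq 1$, and on $u_i$ it is $\tfrac12\cdot 2^{2n-2i}+\tfrac12\cdot 2^{2n-2i}=2^{2n-2i}\geq 1$; hence every non-root vertex receives combined weight at least $1$.

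I then chain the inequalities: for any $r$-unsolvable $f$ one has $|f|=\sum_{v\neq r}f(v)\leq\sum_{v}\big(\tfrac12 w_1(v)+\tfrac12 w_2(v)\big)f(v)=\tfrac12 w_1(f)+\tfrac12 w_2(f)\leq\tfrac12 w_1(\mathbf{T}_1)+\tfrac12 w_2(\mathbf{T}_2)=2^{2n}-1$, applying Lemma \ref{wfl} to each $T_j$. Thus $z^*_{Q_n,r}\leq 2^{2n}-1$ and $\pi(Q_n,r)\leq 2^{2n}$, matching the lower bound. The main obstacle is precisely the choice of strategies: the coverage constraints force the weights on the farthest vertices $a_n,b_n,u_n$ to be tight at $1$, and the crucial observation making the bound exact is that the two parallel spine paths, taken at half weight, simultaneously cover the two corners of every square (each corner lying on exactly one spine, each $u_i$ on both) while keeping the total equal to $2^{2n}-1$; once this is set up, verifying the geometric-series totals and the $\geq 1$ coverage is routine.
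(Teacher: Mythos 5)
Your proof is correct and follows essentially the same route as the paper: the upper bound via the Weight Function Lemma applied to the two parallel spine paths through the $a$- and $b$-corners (the paper sums the two basic strategies and divides by the minimum combined weight $2$, which is equivalent to your half-weight conic combination), and the lower bound via $\pi(Q_n,r)\geq 2^{ecc(r)}=2^{2n}$. Your write-up is in fact more explicit than the paper's, which only points to a figure for the constraint trees.
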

\begin{proof}
	The constraint trees have shown in Figure \ref{f3} and the sum of both of them corresponds to 
	\[
	2(2^{2n-1}+2^{2n-2}+...+2^{0})= 2(2^{2n}-1),
	\]
	so  $ \pi(Q_{n},r)\leqslant 2^{2n} $. Since $ \pi(Q_{n},r)\geqslant 2^{ecc_{Q_{n}}(r)} $, we have $ \pi(Q_{n},r)=2^{2n}$.
	\qed
\end{proof}

\begin{corollary}
	$ \pi(Q_{n}+e,r)=2^{2n+1}. $
\end{corollary}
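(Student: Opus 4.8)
The plan is to follow the proof of the preceding theorem essentially verbatim, observing that $Q_{n}+e$ arises from $Q_{n}$ by adjoining one extra edge that lengthens the longest $r$-path by one, so that $ecc_{Q_{n}+e}(r)=2n+1$ while the rest of the para-chain structure is unchanged. First I would record the lower bound: the configuration placing $2^{2n+1}-1$ pebbles on the unique vertex at distance $2n+1$ from $r$ is $r$-unsolvable, so that $\pi(Q_{n}+e,r)\geqslant 2^{ecc_{Q_{n}+e}(r)}=2^{2n+1}$.

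For the matching upper bound I would invoke the Weight Function Lemma (Lemma \ref{wfl}) with the same pair of basic strategies used in Figure \ref{f3}, each prolonged by the new edge so that it reaches the added terminal vertex. Writing $w_{T_{1}},w_{T_{2}}$ for the two associated weight functions, these are chosen so that $w_{T_{1}}(v)+w_{T_{2}}(v)=2$ for every vertex $v\neq r$; the leading weight along each spine (on the vertex adjacent to $r$) is now $2^{2n}$ rather than $2^{2n-1}$, so the two tree-configurations satisfy
\[
w_{T_{1}}(\textbf{T}_{1})+w_{T_{2}}(\textbf{T}_{2})=2\bigl(2^{2n}+2^{2n-1}+\cdots+2^{0}\bigr)=2\bigl(2^{2n+1}-1\bigr).
\]
For any $r$-unsolvable $f$, summing the two instances of Lemma \ref{wfl} gives $2\sum_{v\neq r}f(v)\leqslant w_{T_{1}}(f)+w_{T_{2}}(f)\leqslant 2(2^{2n+1}-1)$, hence $\vert f\vert\leqslant 2^{2n+1}-1$ and $\pi(Q_{n}+e,r)\leqslant z^{*}_{Q_{n}+e,r}+1\leqslant 2^{2n+1}$. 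Combining the two estimates yields the claimed equality $\pi(Q_{n}+e,r)=2^{2n+1}$.

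The step demanding the most care is the verification that the two extended strategies still form an admissible covering of $Q_{n}+e$: one must check that prolonging each tree by the edge $e$ preserves the defining relation $w(v^{+})=2w(v)$ along both spines, that the added terminal vertex receives weight $1$ from exactly one of the trees, and that every other vertex is still covered with total weight $2$. Once this bookkeeping is settled the weight sum $2(2^{2n+1}-1)$ is immediate, and the corollary reduces to the same argument that established $\pi(Q_{n},r)=2^{2n}$, now with the eccentricity of $r$ raised by one.
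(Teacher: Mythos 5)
Your argument is correct and is exactly the one the paper intends: the corollary is stated there without proof as the immediate extension of the preceding theorem, namely the eccentricity lower bound $2^{2n+1}$ combined with the two basic strategies of Figure \ref{f3} prolonged through the extra edge, whose tree weights each sum to $2^{2n+1}-1$. One small correction to your bookkeeping: $w_{T_1}(v)+w_{T_2}(v)$ equals $2$ only at the vertices lying on both spines (the cut vertices and the new terminal vertex, which belongs to \emph{both} extended trees with weight $1$, not to exactly one); at the non-cut square vertices the sum exceeds $2$, but since the argument only needs $w_{T_1}(v)+w_{T_2}(v)\geqslant 2$ for every $v\neq r$, your chain of inequalities and the conclusion stand.
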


\begin{figure}[ht]
	\centering
	\includegraphics[scale=0.8]{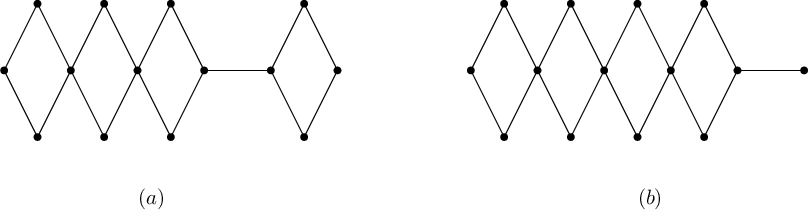}
	\caption{The graphs $ Q_{4}+e $.}\label{f6}
\end{figure}

A \textit{nonbasic strategy} will satisfy the inequality $ w(v^+)\geqslant w(v) $ in place of the equality used in a basic strategy. The following lemma shows that we can use nonbasic strategies in an upper bound certificate since they are conic combinations of a nested family of basic strategies.
\begin{lemma}\rm{\cite{Hurlbert}}
	If $ T $ is a nonbasic strategy for the rooted graph $ (G,r) $, then there exists basic strategies $ T_{1},...,T_{k} $ for $ (G, r) $ and non-negative constants $ c_{1},...,c_{k} $ so that $ T=\sum_{i=1}^{k}c_{i}T_{i} $.
\end{lemma}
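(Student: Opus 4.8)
The plan is to prove the statement by induction on $|V(T)|$, producing the decomposition explicitly through a peeling procedure in which one basic strategy is stripped off at a time. Throughout I identify a strategy with its weight function, so the target identity $T=\sum_i c_iT_i$ means $w=\sum_i c_i w_{T_i}$, where each $w_{T_i}$ is the basic weight function $w_{T_i}(v)=2^{d_i-\mathrm{dist}(v,r)}$ for $v\neq r$ (and $w_{T_i}(r)=0$, with $d_i=ecc_{T_i}(r)$) of a subtree $T_i\subseteq T$ having $r$ as a leaf, and each $c_i\ge 0$. Recall that a nonbasic strategy is nonnegative, has $w(r)=0$, and satisfies the relaxed inequality $w(v^+)\ge 2w(v)$ in place of the basic equality $w(v^+)=2w(v)$ for every non-root vertex $v$ that is not a neighbour of $r$, while root-neighbours carry no upper constraint. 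The single observation that makes the induction run is that subtracting a basic strategy never disturbs these defining inequalities: if $b$ is the basic weight function of the current tree, then $b(v^+)=2b(v)$ exactly, so for any $c$ the difference $w':=w-cb$ satisfies $w'(v^+)-2w'(v)=w(v^+)-2w(v)\ge 0$. Thus the only quantity I must control while peeling is nonnegativity of the remaining weights.

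First I would dispose of the easy cases: if $w\equiv 0$ the empty combination works, and if some leaf $v\ne r$ has $w(v)=0$ I delete $v$ to obtain a strictly smaller nonbasic strategy on $T-v$, apply the inductive hypothesis, and extend each resulting basic strategy by $0$ on $v$ (legitimate because $w(v)=0$). The substantive case is when every leaf $v\ne r$ has $w(v)>0$. Here I let $b$ be the basic weight function of $T$ itself and consider the ratios $\rho(v)=w(v)/b(v)$ over all $v\ne r$. The key step is to show that $\rho$ is nondecreasing towards $r$: for a non-root, non-neighbour vertex $v$ one has $b(v^+)=2b(v)$, whence
\[
\rho(v^+)=\frac{w(v^+)}{2b(v)}\ge\frac{2w(v)}{2b(v)}=\rho(v),
\]
using precisely the nonbasic inequality $w(v^+)\ge 2w(v)$. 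Consequently the minimum of $\rho$ over $v\ne r$ is realized at a leaf. Setting $c:=\min_{v\ne r}\rho(v)\ge 0$, the weight function $w':=w-cb$ is nonnegative, still has $w'(r)=0$, remains a nonbasic strategy by the observation above, and vanishes at the minimizing leaf.

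Having produced $w'=w-cb$ with a new zero-weight leaf, I remove that leaf and recurse; deleting a non-root leaf cannot raise the degree of $r$, so $r$ stays a leaf of every subtree, and the number of vertices carrying positive weight strictly decreases, so the process terminates. Summing the recorded pairs $(c,b)$ then reconstructs $w$ as a nonnegative combination of basic weight functions of a nested chain $T\supseteq T'\supseteq\cdots$ of subtrees, each having $r$ as a leaf, which is exactly the asserted decomposition. The main obstacle is the ratio-monotonicity step: I must guarantee that the first weight to vanish as $c$ grows belongs to a \emph{leaf} rather than an interior vertex, since only leaves can be deleted without breaking the tree, and this is where the defining inequality relating $w(v^+)$ and $2w(v)$ is indispensable — it is also precisely what forces the coefficients $c$ to come out nonnegative. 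A secondary point to check is the bookkeeping at the root: because every weight function in sight sets $w(r)=0$ (the root's weight being immaterial to the Weight Function Lemma), the root never constrains the choice of $c$, and each subtracted $b$ remains a genuine basic strategy at every stage.
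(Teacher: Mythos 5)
The paper does not actually prove this lemma --- it is quoted from Hurlbert with a citation and no argument --- so there is no in-paper proof to compare against; your peeling argument has to stand on its own, and in substance it is the right (indeed the standard) one. The two load-bearing steps are both correct: subtracting $cb$ preserves the defining inequalities because the basic weight function satisfies $b(v^{+})=2b(v)$ with equality, and the ratio $\rho=w/b$ is minimized at a leaf precisely because $w(v^{+})\geq 2w(v)$ makes $\rho$ nondecreasing toward the root. Termination is also sound: in your ``substantive case'' the inequality propagates positivity downward, so every non-root vertex has positive weight, $c=\min\rho>0$, and a leaf weight genuinely drops to zero at each peeling step.

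There are two points you need to address. First, you silently assume $r$ is a leaf of $T$: ``the basic weight function of $T$ itself'' is only a basic strategy when $r$ is a leaf (the paper's basic strategies are trees with $r$ as a leaf), and your remark that deleting non-root leaves ``cannot raise the degree of $r$, so $r$ stays a leaf of every subtree'' presupposes that it was one to begin with, whereas the definition of a strategy places no such restriction on $T$. The fix is a one-line preliminary reduction: the constraints $w(v^{+})\geq 2w(v)$ never couple distinct branches at $r$, so $w$ splits as the sum of its restrictions to the subtrees consisting of $r$, a child $a_j$ of $r$, and the descendants of $a_j$; each restriction is a nonbasic strategy in which $r$ is a leaf, and your peeling then applies branch by branch. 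Second, you state the nonbasic inequality as $w(v^{+})\geq 2w(v)$, while the paper writes $w(v^{+})\geq w(v)$. Your version is the one under which the lemma is true --- with the paper's literal inequality the path $r$--$a$--$b$ with $w(a)=w(b)=1$ is admissible but is not a conic combination of basic strategies, since matching $w(b)=1$ forces the coefficient of the long path to be $1$ and hence contributes $2$ to $a$ already --- so you have in effect repaired a typo in the hypothesis; that repair should be made explicit rather than passed over, because your ratio-monotonicity step genuinely fails under the weaker inequality.
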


\begin{figure}[ht]
	\centering
	\includegraphics[scale=0.85]{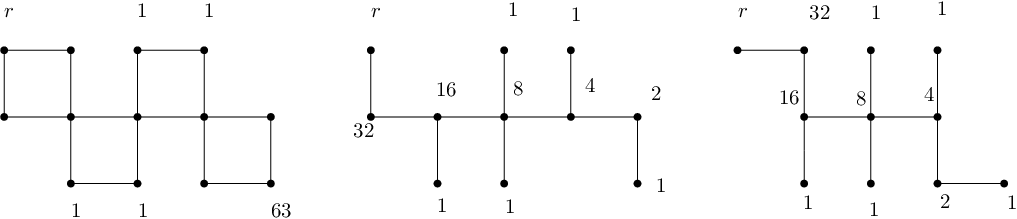}
	\caption{A $ r $-unsolvable configuration of graph $ O_{4} $ and nonbasic strategies.}\label{f7}
\end{figure}

\begin{theorem}
	If  $ O_{n} $ is a ortho-chain square cactus with $ n $ squares, then 
	$$ \pi(O_{n},r)=2^{n+2}+2n-4. $$
\end{theorem}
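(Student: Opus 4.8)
The plan is to bound $\pi(O_n,r)$ from above using the Weight Function Lemma (Lemma \ref{wfl}) together with nonbasic strategies, exactly as was done for the para-chain cactus $Q_n$, and then to exhibit a matching $r$-unsolvable configuration of size $\pi(O_n,r)-1$ to establish the lower bound. First I would fix the target vertex $r$ to be an endpoint of the chain at maximal eccentricity, and record the structure of $O_n$: it is $n$ copies of $C_4$ glued in a chain where consecutive cut-vertices within each square are \emph{adjacent} (the ortho condition). Since each ortho-square contributes a cut-vertex that is adjacent to the previous one, the eccentricity of $r$ grows more slowly than in the para case; counting distances carefully should give $ecc_{O_n}(r)=n+1$, which explains the leading term $2^{n+2}=4\cdot 2^{n+1}$ rather than $2^{2n}$.

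The main computation is the upper bound. Following the $Q_n$ proof, I would select a family of tree strategies (here nonbasic, as Figure \ref{f7} suggests) whose weight functions, when summed, dominate the all-ones target configuration and whose total weight telescopes. The key identity to verify is
\[
\sum_{v\neq r} w(v) \;=\; 2^{n+2}+2n-5,
\]
so that the Weight Function Lemma yields $z^*_{O_n,r}\leqslant 2^{n+2}+2n-5$ and hence $\pi(O_n,r)\leqslant z^*_{O_n,r}+1=2^{n+2}+2n-4$. The strategies must be chosen so that every vertex outside $r$ receives total weight at least $1$ (so the constraint $w(f)\leqslant w(\mathbf{T})$ actually controls the size $\sum_{v\neq r}f(v)$); the ortho-squares split into a ``long'' branch of geometrically doubling weights contributing the $2^{n+2}$ term and a collection of ``short'' side vertices each contributing a bounded amount, which accumulate into the linear $2n-4$ term. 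I expect the nonbasic weights on the non-cut vertices of each square to be the delicate part of the bookkeeping.

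For the lower bound I would construct the $r$-unsolvable configuration depicted in Figure \ref{f7}: place $2^{n+1}-1$ pebbles on the vertex farthest from $r$ along the main branch (insufficient by one to reach $r$, since $dist=n+1$ forces $2^{n+1}$), and place a constant number of pebbles (amounting to the residual $2^{n+2}+2n-4-2^{n+1}$) on the side vertices of the intermediate squares in a way that no combination of pebbling moves can deliver a pebble to $r$. Verifying unsolvability amounts to checking that the weight of this configuration under the chosen strategy equals $w(\mathbf{T})$, matching the upper-bound certificate and thereby proving optimality.

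The hard part will be certifying that the explicit configuration is genuinely $r$-unsolvable: for ortho-squares the adjacency of cut-vertices creates short alternate routes, so I must rule out that pebbles deposited on a side vertex can be funneled through its adjacent cut-vertex to augment the main branch. I would handle this by re-using the weight argument in the reverse direction—showing the configuration meets the Weight Function Lemma bound with equality forces it to be extremal and hence unsolvable—rather than by an ad hoc case analysis of move sequences.
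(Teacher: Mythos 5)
Your overall framework matches the paper's: an upper bound via the Weight Function Lemma applied to nonbasic strategies, plus an explicit $r$-unsolvable configuration of size one less for the lower bound (both of which the paper reads off Figure \ref{f7}). However, there are two concrete problems with your proposal.

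First, the eccentricity is miscounted. Taking $r$ to be the vertex of the first square opposite its cut-vertex, you have distance $2$ to the first cut-vertex, distance $1$ across each of the $n-2$ internal ortho-squares (their cut-vertices are adjacent), and distance $2$ to the antipode in the last square, so $ecc_{O_n}(r)=n+2$, not $n+1$. The leading term $2^{n+2}$ is exactly $2^{ecc(r)}$; no factor of $4$ needs explaining. This error propagates into your lower-bound construction: the extremal configuration should put $2^{n+2}-1$ pebbles on the antipodal vertex and one pebble on each of the two off-backbone vertices of each of the $n-2$ internal squares, for a total of $2^{n+2}-1+2(n-2)=2^{n+2}+2n-5$. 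Your version puts only $2^{n+1}-1$ pebbles on the far vertex and must then distribute a ``residual'' of $2^{n+1}+2n-4$ pebbles --- which is exponentially large, not constant --- on side vertices; a side vertex adjacent to a cut-vertex carrying that many pebbles would easily deliver a pebble to $r$, so no such configuration is unsolvable.

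Second, your method for certifying unsolvability is logically invalid. The Weight Function Lemma states that \emph{if} $f$ is $r$-unsolvable \emph{then} $w(f)\leqslant w(\textbf{T})$; it is a necessary condition, not a sufficient one. A configuration meeting the constraint with equality can perfectly well be solvable, so ``extremal for the LP'' does not imply ``unsolvable.'' You must verify unsolvability of the exhibited configuration directly, e.g.\ by a potential argument with weights $2^{-dist(v,r)}$ combined with the observation that the isolated single pebbles on the internal side vertices cannot be profitably combined with the main pile, or by an explicit case analysis of move sequences. This is precisely the ``hard part'' you flag, and the shortcut you propose for it does not work.
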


\begin{proof}
	We consider the nonbasic strategies which have shown in Figure \ref{f7}. Since 
	\[
	2(\sum_{i=0}^{n+1}2^{i})+2(2n-4)= 2(2^{n+2}-1)+2(2n-4),
	\]
	so  $ \pi(O_{n},r)\leqslant 2^{n+2}+2n-4 $. Let $ f $ be the $ r $-unsolvable configuration that has been shown in Figure \ref{f7}. Therefore the result follows. \qed
\end{proof}


\section{Pebbling number of Polymers}
In this section,  we study  the pebbling number  of some graphs  from their monomer units.

Let $ G $ be a connected graph constructed from pairwise disjoint connected graphs $ G_1,...,G_k $ as follows. Select a vertex of $ G_1 $, a vertex of $ G_2 $, and identify these two vertices that for simplicity we will call it a node. Then continue in this manner inductively. Note that the graph $G $ constructed in this way has a tree-like structure, the $ G_i $’s being its building stones (see Figure \ref{f8}) and say that $ G $ is a \textit{polymer} graph, obtained by point-attaching from $ G_1,...,G_k $ and that $ G_i $’s are the \textit{monomer} units of $ G $. A particular case of this construction is the decomposition of a connected graph into blocks \cite{Emeric}.

Let $ r $ be the target vertex in graph $ G $. A pebbling step from $ u $ to $ v $ is $ r $-greedy if $ dist(v,r)<dist(u,r) $. It is $ r $-semigreedy if $dist(v, r) \leqslant dist(u, r)$. Furthermore, A set of pebbling steps is $ r $-(semi) greedy if every one of its steps is $ r $-(semi) greedy, a configuration is $ r $-(semi) greedy it is has an $  r$-(semi)greedy solution, and a graph G is $ r $-(semi) greedy if every configuration of size at least $ \pi(G,r) $ is $ r$- (semi-) greedy. We also say that G is (semi-) greedy if it is $ r $-(semi-) greedy for every choice of $ r $. The No-Cycle Lemma shows that every tree is greedy \cite{Bunde}.

\begin{figure}[ht]
	\centering
	\includegraphics[scale=0.8]{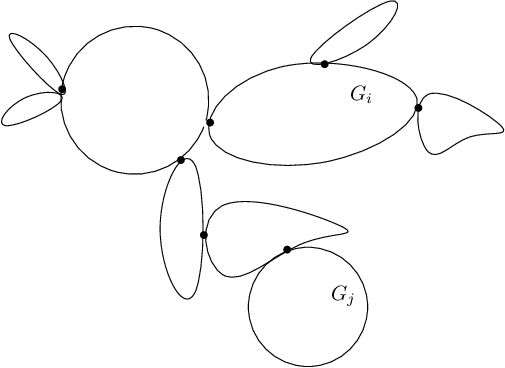}
	\caption{A polymer graph with monomer units $ G_1,...,G_k $ \cite{Alikhani4}.}\label{f8}
\end{figure}

The following theorem gives an upper bound for a polymer graph:

\begin{theorem}\label{***}
	If $ G $ is a polymer graph obtained by point-attaching from pairwise disjoint connected graphs $ G_1,...,G_k $, then $ \pi(G)\leqslant\displaystyle\prod_{i=1}^k\pi(G_i) $.
\end{theorem}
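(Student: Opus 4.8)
The plan is to induct on the number of monomer units $k$. The base case $k=1$ is trivial since $\pi(G)=\pi(G_1)$. For the inductive step, I would split off a single monomer unit that is a ``leaf'' in the tree-like structure of the polymer. Concretely, because the $G_i$'s are attached in a tree-like fashion, there exists some monomer unit $G_j$ that is joined to the rest of the polymer at a single node $c$; let $H$ denote the polymer obtained from the remaining units $G_1,\dots,G_{j-1},G_{j+1},\dots,G_k$, so that $G$ is the point-attaching of $H$ and $G_j$ at the shared node $c$. By the inductive hypothesis, $\pi(H)\leqslant\prod_{i\neq j}\pi(G_i)$, and it suffices to prove the two-factor estimate $\pi(G)\leqslant\pi(H)\,\pi(G_j)$.

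The heart of the argument is therefore the two-block case: if $G$ is formed by identifying a vertex of $H$ with a vertex of $G_j$ at a cut-vertex $c$, then $\pi(G)\leqslant\pi(H)\pi(G_j)$. Fix a target $r$; by symmetry of the two blocks I may assume $r\in V(H)$ (the case $r\in V(G_j)$ is identical after swapping roles). Let $f$ be any configuration on $G$ with $|f|\geqslant\pi(H)\pi(G_j)$. The strategy is to use $c$ as a transit vertex: first pull pebbles living in $G_j$ across to $c$, and then solve $r$ inside $H$. First I would bound the pebbles that can be delivered to $c$ from within $G_j$. Writing $a$ for the total number of pebbles of $f$ lying strictly inside $G_j$ (off the node $c$), and noting that any cut-vertex separates $G_j$ from $H$, the submultiplicativity we need hinges on the fact that $G_j$ can send roughly $a/\pi(G_j,c)$ pebbles onto $c$: more precisely, $\lfloor a/\pi(G_j,c)\rfloor$ pebbles can be guaranteed on $c$, since every block of $\pi(G_j,c)$ pebbles inside $G_j$ is $c$-solvable and therefore yields at least one pebble on $c$.

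With this delivery estimate in hand, the counting step is the crux. Let $b$ be the number of pebbles of $f$ already on the $H$-side (including $c$). If $b\geqslant\pi(H)$ we are done immediately, since $f$ restricted to $H$ already solves $r$. Otherwise $b\leqslant\pi(H)-1$, so the pebbles inside $G_j$ number at least $\pi(H)\pi(G_j)-(\pi(H)-1)=\pi(H)(\pi(G_j)-1)+1$; feeding these through $c$ yields at least $\pi(H)$ pebbles accumulated on the $H$-side (combining the transported pebbles with the $b$ already present), which then solve $r$ by definition of $\pi(H)$. The arithmetic here is where one must be careful: the floor function in the delivery bound and the interaction between pebbles already sitting on $c$ versus those being imported must be accounted for so that the totals cross the $\pi(H)$ threshold. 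I expect this reconciliation of the two ``budgets'' — the $H$-side budget and the $G_j$-transport budget — to be the main obstacle, and it is exactly the place where the product structure $\pi(H)\pi(G_j)$ is forced. Once the two-block inequality is established, the induction closes and the full product bound $\pi(G)\leqslant\prod_{i=1}^k\pi(G_i)$ follows.
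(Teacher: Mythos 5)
Your proposal is correct, and it reaches the bound by a genuinely different and more self-contained route than the paper. The paper first asserts (appealing to a ``distance bound'') that among all point-attachings of the same monomers the chain $C(G_1,\dots,G_k)$ has the largest pebbling number, and then argues along the chain using greediness at the nodes together with $\pi_t(G)\leqslant t\pi(G)$; the reduction to the chain is not actually justified there, so the published argument is sketchy at that step. You instead peel off a leaf monomer $G_j$ attached at a single node $c$ and prove the two-block inequality $\pi(G)\leqslant\pi(H)\pi(G_j)$ directly, which closes the induction without any extremality claim. Your transport estimate is sound: partitioning the $a$ pebbles lying strictly inside $G_j$ into disjoint groups of size $\pi(G_j,c)$, each group is $c$-solvable using only its own pebbles, so $\lfloor a/\pi(G_j,c)\rfloor$ pebbles reach $c$ --- this is exactly the ``disjoint copies'' argument the paper itself uses to show $\pi_t(G)\leqslant t\pi(G)$. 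The arithmetic you flag as the main obstacle does in fact close: with $a\geqslant\pi(H)\pi(G_j)-b$ one gets at least $\pi(H)-\lceil b/\pi(G_j)\rceil$ delivered pebbles, and $b+\pi(H)-\lceil b/\pi(G_j)\rceil\geqslant\pi(H)$ since $\lceil b/\pi(G_j)\rceil\leqslant b$. What your approach buys is rigor and locality (everything happens at one cut vertex); what the paper's approach would buy, were the chain reduction proved, is the stronger structural statement that the chain is the extremal point-attaching.
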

\begin{proof}
	Let $ G_1,...,G_k $ be a finite sequence of pairwise disjoint connected graphs. By the distance bound, we know that the maximum pebbling number of point-attaching from $ G_1,...,G_k $ is the chain of the graphs with composed of monomers $ \lbrace G_{i}\rbrace_{i=1}^k $ with respect to the nodes $ \lbrace x_i\rbrace_{i=1}^{k-1} $ (see Figure \ref{f9}(b)). Let $ C(G_1,...,G_k) $ be the chain of graphs and $ r $ be a target vertex in $ G_k $. Since a pebbling step from $ x_i $ to $ v\in G_{i+1} $ is r-greedy pebbling step in any $ r $-solvable configuration $ f $ on the graph $ C(G_1,...,G_k) $ and $ \pi_t(G)\leqslant t\pi(G) $, so the result follows inductively. 
	\qed
\end{proof}

We consider some particular cases of these graphs and study their upper bound for pebbling number. First we consider the link and chain of graphs. 
\begin{figure}[ht]
	\centering
	\includegraphics[scale=0.8]{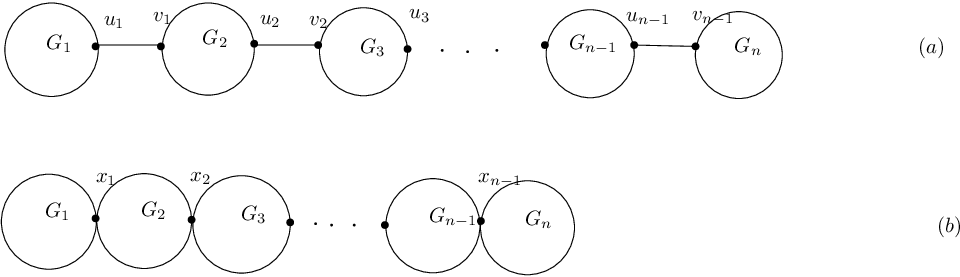}
	\caption{(a) Link of $ n $ graphs $ G_1,...,G_n $, (b) Chain of $ n $ graphs $ G_1,...,G_n $.}\label{f9}
\end{figure}

\begin{corollary}
	\begin{itemize}
		\item[(i)]
		Let $ G $ be a polymer graph with composed of monomers $ \lbrace G_{i}\rbrace_{i=1}^n $ with respect to the nodes $ \lbrace u_{i}, v_i\rbrace_{i=1}^{n-1} $. If $ G $ is the link of graphs (see Figure \ref{f9}), then
		\[
		\pi(G)\leqslant 2^{n-1}\displaystyle\prod_{i=1}^n\pi(G_i),
		\]
		this upper bound is sharp for the cactus graph $ Q_n+(n-1)e $.
		\item[(ii)]
		Let $ G $ be the chain of graphs with composed of monomers $ \lbrace G_{i}\rbrace_{i=1}^n $ with respect to the nodes $ \lbrace x_i\rbrace_{i=1}^{n-1} $. Then 
		\[
		\pi(G)\leqslant\displaystyle\prod_{i=1}^n\pi(G_i), 
		\]
		in particular, it is sharp for the cactus graph $ Q_n $.
	\end{itemize}
\end{corollary}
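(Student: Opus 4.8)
The plan is to derive both parts directly from Theorem~\ref{***} (the product bound for polymers), handling the chain as the base case and realizing the link as a polymer with extra edge-monomers inserted between consecutive units.

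First I would dispatch part (ii), since it is essentially a reading of Theorem~\ref{***}. The chain $C(G_1,\dots,G_n)$ is precisely a polymer obtained by point-attaching the monomers $G_1,\dots,G_n$ at the single nodes $x_1,\dots,x_{n-1}$, so Theorem~\ref{***} gives $\pi(G)\leqslant\prod_{i=1}^n\pi(G_i)$ with no further work. For sharpness I would take $G_i=C_4$ for every $i$, so that the chain is exactly the para-chain square cactus $Q_n$; since $\pi(C_4)=4$, the right-hand side equals $4^n=2^{2n}$, and the earlier theorem $\pi(Q_n,r)=2^{2n}$ shows the inequality is met with equality.

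For part (i), the key observation is that a \emph{link}, in which consecutive monomers $G_i$ and $G_{i+1}$ are joined by a bridge $v_iu_{i+1}$ rather than by identifying a vertex, is itself a chain once each bridge is regarded as an inserted edge-monomer. Concretely, I would take a copy of $K_2$ with endpoints $a_i,b_i$ for each $i$, identify $v_i$ with $a_i$ and $u_{i+1}$ with $b_i$; the edge $a_ib_i$ then becomes the desired bridge, and the link is realized as the polymer on the $2n-1$ pairwise-disjoint connected monomers
\[
G_1,\,K_2,\,G_2,\,K_2,\,\dots,\,K_2,\,G_n.
\]
Applying Theorem~\ref{***} to this sequence and using $\pi(K_2)=\pi(P_2)=2$ yields
\[
\pi(G)\leqslant\Big(\prod_{i=1}^n\pi(G_i)\Big)\cdot\big(\pi(K_2)\big)^{n-1}=2^{n-1}\prod_{i=1}^n\pi(G_i),
\]
which is the claimed bound. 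For sharpness I would let every $G_i$ be $C_4$, so that the link is the square-cactus link $Q_n+(n-1)e$: the upper bound becomes $2^{n-1}\cdot 4^n=2^{3n-1}$, while each square contributes $2$ and each of the $n-1$ bridges contributes $1$ to the end-to-end distance, so ${\rm diam}=3n-1$ and the distance lower bound $2^{{\rm diam}}=2^{3n-1}$ forces equality.

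The main obstacle I anticipate lies in verifying that the reduction is legitimate rather than in any estimate: one must check that inserting the $K_2$'s really produces the link (so that the bridge edges appear exactly where intended and no spurious identifications occur), and that the hypotheses of Theorem~\ref{***}---pairwise disjoint \emph{connected} monomers attached in a chain---hold for the augmented sequence; since $K_2$ is connected this causes no difficulty. The only genuinely computational point is the diameter count for $Q_n+(n-1)e$ in the sharpness argument, which I would settle by tracing a geodesic from one terminal square to the other.
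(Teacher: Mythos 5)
Your proposal is correct and is essentially the argument the paper intends: the corollary is stated as a direct consequence of Theorem~\ref{***}, applied to the chain as-is and to the link after inserting a $K_2$ monomer for each bridge (accounting for the factor $2^{n-1}=\pi(K_2)^{n-1}$), with sharpness coming from $\pi(Q_n,r)=2^{2n}$ and the diameter lower bound $2^{{\rm diam}}=2^{3n-1}$ for $Q_n+(n-1)e$. No gaps beyond those already present in Theorem~\ref{***} itself, on which both you and the paper rely.
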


As another example of point-attaching graph, consider the graph $ K_n $ and $ n $ copies of $ K_m $. By definition, the graph $ Q(n,m) $ is obtained by identifying each vertex of $ K_n $ with a vertex of a unique $ K_m $. Actually $ Q(n,m)=K_n\circ K_{m-1} $, where $\circ$ is corona product. Note that the corona product $G\circ H$ of two graphs $G$ and $H$ is defined as the graph obtained by taking one copy of $G$ and $\vert V(G)\vert $ copies of $H$ and joining the $i$-th vertex of $G$ to every vertex in the $i$-th copy of $H$.

The following theorem gives the pebbling number and optimal pebbling number of corona of a complete graph with any arbitrary graph. 
\begin{theorem}{\rm \cite{Alikhani1}}\label{cor}
	For any graph $ H $ of order $ h $, $ \pi(K_n\circ H)=nh+2n+2 $, $ \pi^*(K_n\circ H)=4 $ ($ n>2 $).
\end{theorem}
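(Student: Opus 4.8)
The plan is to treat the two quantities separately, exploiting the layered structure of $K_n\circ H$: write $v_1,\dots,v_n$ for the vertices of the central $K_n$ and $H_1,\dots,H_n$ for the attached copies, so that $v_i$ is joined to every vertex of $H_i$. The distance facts I will use (and which are independent of the internal edges of $H$) are that two vertices lying in different copies $H_i,H_j$ are at distance $3$, a vertex of $H_i$ is at distance $2$ from any $v_j$ with $j\neq i$, and $ecc(v_i)=2$ while $ecc(u)=3$ for $u\in H_i$. Thus a copy vertex is the only candidate for the worst target, and I would fix $r\in H_1$ throughout the pebbling-number argument.

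For the upper bound $\pi(K_n\circ H)\le nh+2n+2$ I would combine the subgraph (spanning-tree) bound with Chung's tree formula. Take the spanning tree $T$ whose edges are the hub edges $v_1v_j$ $(2\le j\le n)$ together with all edges $v_iu$ for $u\in H_i$; this uses only edges present in $K_n\circ H$ and is the same tree for every $H$. Rooting $T$ at $r$ and reading off the maximum $r$-path partition, the unique longest path $r\,v_1\,v_2\,p$ has length $3$, each remaining hub $v_j$ $(3\le j\le n)$ heads a path $v_1\,v_j\,p$ of length $2$, and every other (leaf) edge is a path of length $1$; this gives one length-$3$ path, $n-2$ length-$2$ paths and $nh-n$ length-$1$ paths, so $m=nh-1$ and
\[
\pi(T,r)=\Big(2^{3}+(n-2)2^{2}+(nh-n)2^{1}\Big)-m+1=nh+2n+2 .
\]
By the subgraph bound $\pi(K_n\circ H,r)\le\pi(T,r)=nh+2n+2$, and the analogous computation with a hub as root yields only $nh+2n-1$, so the maximum over all roots is $nh+2n+2$.

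For the matching lower bound I would exhibit one $r$-unsolvable configuration of size $nh+2n+1$: put $2^{3}-1=7$ pebbles on a single vertex of $H_2$, $2^{2}-1=3$ pebbles on a single vertex of each $H_j$ with $3\le j\le n$, one pebble on every remaining non-hub vertex (all other copy vertices, including those of $H_1\setminus\{r\}$), and none on the hubs or on $r$. Summing gives $(h+6)+(n-2)(h+2)+(h-1)=nh+2n+1$. Since $r$ is reached only through $v_1$ and every vertex of $H_1$ carries at most one pebble, solving $r$ would require placing two pebbles on $v_1$; the greedy count shows that the $H_2$-vertex delivers $\lfloor\lfloor 7/2\rfloor/2\rfloor=1$ pebble to $v_1$ while each $H_j$-vertex $(j\ge3)$ delivers $\lfloor\lfloor 3/2\rfloor/2\rfloor=0$, for a total of at most one. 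The step I expect to require the most care is ruling out that the clique on the hubs permits a cleverer route: I would argue that, because the shortest path from any copy vertex to $v_1$ has length $2$ through its own hub and the weight $\sum_v f(v)2^{-dist(v,v_1)}$ cannot increase under greedy moves toward $v_1$, no rerouting through other hubs can raise the number delivered to $v_1$ above one. Hence the configuration is unsolvable and $\pi(K_n\circ H)\ge nh+2n+2$.

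Finally, for $\pi^{*}(K_n\circ H)=4$ when $n>2$, the upper bound is immediate: placing $4$ pebbles on a single hub $v_1$ is solvable, since $ecc(v_1)=2$ and $4=2^{2}$ pebbles suffice to reach every vertex. For the lower bound I would show that no configuration of weight at most $3$ is solvable, by classifying it by its support. A configuration of three separated single pebbles reaches only the three supported vertices, and a configuration of $3$ pebbles on one vertex $u$ reaches only its closed neighbourhood $N[u]$; both fail unless some vertex is universal, which never happens for $n>2$. A configuration of $2$ pebbles on $u$ and $1$ on $w$ reaches exactly $N[u]\cup\{w\}$ when $dist(u,w)\ge 2$ and exactly $N[u]\cup N[w]$ when $u,w$ are adjacent, so solvability would force two vertices to dominate $K_n\circ H$; this is impossible once $n>2$, because the vertices of each copy $H_j$ are dominated only from within $H_j\cup\{v_j\}$ and there are at least three copies. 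This yields $\pi^{*}\ge 4$, completing the proof, and it also explains the hypothesis $n>2$, since for $n=2$ the two hubs already dominate the graph and a weight-$3$ configuration suffices.
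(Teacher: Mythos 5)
The paper itself contains no proof of this theorem --- it is quoted from reference \cite{Alikhani1} --- so your argument can only be judged on its own merits. Two of your three parts are sound. The upper bound is correct: for the spanning tree you chose (the star of $K_n$ centered at $v_1$ together with all copy edges), the maximum $r$-path partition in Chung's sense (each path must descend toward the root, so no path may bend over $v_1$) is exactly your list of one path of length $3$, $n-2$ paths of length $2$ and $nh-n$ paths of length $1$, and Chung's formula plus the subgraph bound give $\pi(K_n\circ H,r)\leqslant nh+2n+2$. The $\pi^{*}=4$ part is also correct: four pebbles on a hub are solvable since every vertex is within distance $2$ of it, and your support classification of weight-$3$ configurations works because two closed neighbourhoods cannot cover $n\geqslant 3$ copies of $H$.

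The genuine gap is in the unsolvability of your lower-bound configuration. The tool you invoke, the weight $w(f)=\sum_v f(v)2^{-dist(v,v_1)}$, provably cannot deliver the claim that at most one pebble reaches $v_1$: for your configuration its value is $\tfrac{h-1}{2}+\tfrac{(h+6)+(n-2)(h+2)}{4}=\tfrac{nh+h+2n}{4}$, which already exceeds $2$ for $n=3$, $h=1$, because every scattered single pebble contributes to it. Such potential arguments cannot distinguish $nh-n$ isolated pebbles (which are useless) from the same weight concentrated on one vertex (which is not), so the step ``no rerouting can raise the number delivered to $v_1$ above one'' is simply not established by it. What is missing is the standard mechanism for discarding single pebbles: for instance, use the No-Cycle Lemma \cite{Bunde} to restrict attention to solutions whose moves never cross a cut vertex $v_j$ back into $H_j$, and then use the block potential $\Phi(A)=\sum_{u\in A}\lfloor f(u)/2\rfloor$, which is non-increasing under moves inside a block: it shows $H_2$ can deliver at most $3$ pebbles to $v_2$, each $H_j$ with $j\geqslant 3$ at most $1$ pebble to $v_j$, hence the hub clique delivers at most $\lfloor 3/2\rfloor=1$ pebble to $v_1$, and the singles in $H_1$ can never move. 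The same care is needed for your sentence ``$r$ is reached only through $v_1$'': when $H$ has edges, $r$ has neighbours inside $H_1$, and one must argue that none of them can ever hold two pebbles. With that replacement the proof is complete; as written, the key step of the lower bound is unsupported.
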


Using Theorem \ref{cor}, we have the following corollary: 

\begin{corollary}
	For $ n>2 $, $ \pi(Q(n,m))=mn+n+2 $, $ \pi^*(Q(n,m))=4 $.
\end{corollary}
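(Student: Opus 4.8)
The plan is to reduce this corollary entirely to Theorem \ref{cor} via the corona-product identification already recorded in the text, namely $Q(n,m)=K_n\circ K_{m-1}$. Since that identity is stated just before the corollary, the whole argument is a substitution: I would take the monomer unit to be $H=K_{m-1}$ and feed it into the general corona formula.

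First I would record the order of the attached graph. The complete graph $K_{m-1}$ has $h=m-1$ vertices. Then I would invoke Theorem \ref{cor}, which applies to $K_n\circ H$ for \emph{any} graph $H$ of order $h$ provided $n>2$, giving
\[
\pi(K_n\circ H)=nh+2n+2,\qquad \pi^*(K_n\circ H)=4.
\]
Substituting $h=m-1$ into the first formula yields $\pi(Q(n,m))=n(m-1)+2n+2=mn-n+2n+2=mn+n+2$, which is exactly the claimed pebbling number. The optimal pebbling number requires no computation at all: Theorem \ref{cor} gives $\pi^*(K_n\circ H)=4$ independently of $H$, so $\pi^*(Q(n,m))=4$ immediately.

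There is essentially no obstacle in this argument; the only point demanding care is making sure the hypotheses of Theorem \ref{cor} are met, namely that $K_{m-1}$ is a legitimate choice of $H$ (it is a graph of order $m-1$) and that the standing assumption $n>2$ of the corollary matches the range in which Theorem \ref{cor} is stated. Since both conditions hold verbatim, the corollary follows directly. If I wanted to make the write-up self-contained, I might also remark why the identification $Q(n,m)=K_n\circ K_{m-1}$ is correct: attaching a copy of $K_m$ to each vertex of $K_n$ by gluing along a single shared vertex leaves $m-1$ fresh vertices per copy, each joined to exactly one vertex of $K_n$ and forming a clique among themselves, which is precisely the corona of $K_n$ with $K_{m-1}$.
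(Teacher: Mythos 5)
Your proposal is correct and matches the paper's reasoning exactly: the corollary is derived by substituting $H=K_{m-1}$ (so $h=m-1$) into Theorem \ref{cor}, using the stated identification $Q(n,m)=K_n\circ K_{m-1}$, which gives $n(m-1)+2n+2=mn+n+2$ and $\pi^*=4$. Your added remark justifying the corona identification is a small bonus the paper leaves implicit.
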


\begin{figure}[ht]
	\centering
	\includegraphics[scale=0.5]{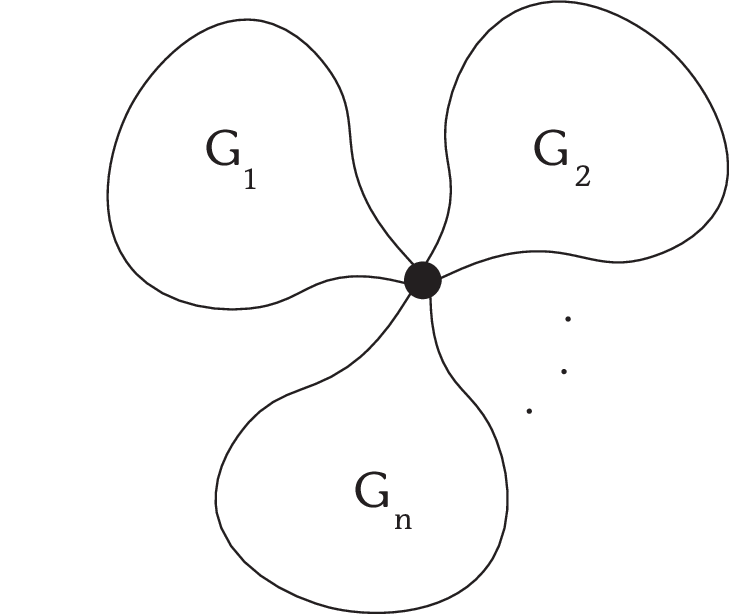}
	\caption{Bouquet of $ n $ graphs $ G_1,...,G_n $.}\label{f10}
\end{figure}

Here, we consider another kind of polymer graphs which is called the bouquet of 
graphs (Figure \ref{f10}). 

\begin{theorem} 
	Let $G_1,G_2, \ldots , G_n$ be a finite sequence of pairwise disjoint connected graphs and let
	$x_i \in V(G_i)$. Let $B(G_1,...,G_n)$ be the bouquet of graphs $\{G_i\}_{i=1}^n$ with respect to the vertices $\{x_i\}_{i=1}^n$ and obtained by identifying the vertex $x_i$ of the graph $G_i$ with node $x$ (see Figure \ref{f10}). Then,	
	\[
	\pi(B(G_1,...,G_n))\leqslant\pi(G_1)\pi(G_2)+\displaystyle\sum_{i=3}^n(\pi(G_i)-1),
	\]
	where $ \pi(G_1)\geqslant\pi(G_2)\geqslant ...\geqslant\pi(G_n) $ . In particular, this upper bound is tight for the generalized friendship graph $ F_{n,4} $.
\end{theorem}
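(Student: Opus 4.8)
The plan is to determine $\pi(B,r)$ for every target $r$ and take the maximum. I would split into the case $r=x$ (the common node) and the case $r\in V(G_j)\setminus\{x\}$ for some $j$, and show the latter is what produces the bound. Throughout, write $f(x)$ for the pebbles on the node and, for each $i$, let $f_i$ be the restriction of $f$ to $V(G_i)\setminus\{x\}$, so that $|f|=f(x)+\sum_i|f_i|$. The structural fact I would lean on is that the petals pairwise meet only in $x$: pebbling moves performed inside two distinct petals never interfere, so each petal $G_i$ can be treated independently as a device that funnels pebbles onto $x$, and the pebbles so delivered may then be combined inside the petal $G_j$ that contains the target.

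Fix a target $r\in V(G_j)\setminus\{x\}$, regard $G_j$ as the monomer carrying $r$, and view every other petal as a source feeding the node $x\in V(G_j)$. The key quantitative input is the inequality $\pi_t(G_i,x)\leqslant t\,\pi(G_i)$ used in the proof of Theorem \ref{***}: if source petal $G_i$ carries $|f_i|$ pebbles, then choosing $t=\lfloor |f_i|/\pi(G_i)\rfloor$ shows that at least $q_i:=\lfloor |f_i|/\pi(G_i)\rfloor$ pebbles can be driven onto $x$. Collecting these contributions produces a configuration supported on $V(G_j)$ of size at least $|f_j|+f(x)+\sum_{i\neq j}q_i$. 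Since any configuration on $G_j$ of size $\pi(G_j,r)\leqslant \pi(G_j)$ is $r$-solvable, unsolvability of $f$ forces
\[
|f_j|+f(x)+\sum_{i\neq j}\Big\lfloor |f_i|/\pi(G_i)\Big\rfloor\;\leqslant\;\pi(G_j)-1 .
\]

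To turn this into a bound on $|f|$ I would use, for each source petal, $|f_i|\leqslant \pi(G_i)q_i+(\pi(G_i)-1)$ together with $\sum_{i\neq j}\pi(G_i)q_i\leqslant M\sum_{i\neq j}q_i$, where $M:=\max_{i\neq j}\pi(G_i)$; the displayed inequality bounds $\sum_{i\neq j}q_i$ by $\pi(G_j)-1$, the worst case occurring when $|f_j|=f(x)=0$. A short computation then yields $|f|\leqslant \Phi(j)$ with $\Phi(j):=M(\pi(G_j)-1)+\sum_{i\neq j}(\pi(G_i)-1)$. Ordering $\pi(G_1)\geqslant\cdots\geqslant\pi(G_n)$, the identity $\Phi(1)-\Phi(j)=(\pi(G_1)-1)\bigl(\pi(G_2)-\pi(G_j)\bigr)\geqslant 0$ shows $\Phi$ is maximized at $j=1$, where $\Phi(1)=\pi(G_1)\pi(G_2)+\sum_{i\geqslant 3}(\pi(G_i)-1)-1$; hence $\pi(B,r)\leqslant \Phi(1)+1$ equals the claimed value for every petal target. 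The case $r=x$ is settled by the elementary count $\pi(B,x)=1+\sum_i(\pi(G_i,x)-1)$, which is no larger since $\pi(B,x)\leqslant 1+\sum_i(\pi(G_i)-1)$ and $(\pi(G_1)-1)(\pi(G_2)-1)\geqslant 0$.

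I expect the main obstacle to be the bookkeeping around the floor functions: converting the per‑petal delivery estimate $q_i=\lfloor |f_i|/\pi(G_i)\rfloor$ back into a clean linear bound on $|f|$ while keeping the weighting by $M=\pi(G_2)$ sharp, and checking that the extremal unsolvable configuration really does pile its surplus onto a single heaviest source petal rather than onto the target petal. Finally, for tightness I would specialize to $G_i=C_4$ for all $i$, where $\pi(C_4)=4$ makes the bound equal $4\cdot 4+3(n-2)=3n+10$; by Theorem \ref{*} this equals $\pi(F_{n,4})$, and the configuration placing $15$ pebbles on the vertex of one non-target square opposite $x$ together with $3$ pebbles on each remaining square is $r$-unsolvable of size $3n+9$, certifying that the inequality is an equality for $F_{n,4}$.
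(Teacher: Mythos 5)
Your proof is correct and follows essentially the same route as the paper's: split on whether the target is the node $x$ (handled by pigeonhole) or lies in some petal $G_j$, and in the latter case use $\pi_t(G_i)\leqslant t\,\pi(G_i)$ to funnel pebbles from the other petals onto $x$ and then solve inside $G_j$. You make explicit the floor-function bookkeeping, the maximization over $j$, and the extremal configuration for $F_{n,4}$, all of which the paper's one-sentence Case (ii) leaves implicit, but the underlying argument is the same.
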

\begin{proof}
	Let $G_1,G_2, \ldots , G_n$ be a finite sequence of pairwise disjoint connected graphs and let
	$x_i \in V(G_i)$. Let $G$ be the bouquet of graphs $\{G_i\}_{i=1}^n$ with respect to the vertices $\{x_i\}_{i=1}^n$ and obtained by identifying the vertex $x_i$ of the graph $G_i$ with node $x$. We have two cases:
	\begin{itemize}
		\item[(i)]
		The target vertex $ r $ is the node $ x $ in the the bouquet graph $ B(G_1,...,G_n) $.  In this case, by
		pigeonhole principle we have $ \pi(B,r)=\displaystyle\sum_{i=1}^n(\pi(G_i)-1)+1 $.
		\item[(ii)]
		Let $ r\neq x $ be an arbitrary target vertex in $ V(G_j) $.  If we put $ \pi(G_j) $ pebbles on the node $ x $, then by Theorem \ref{***}, we have $ \pi(B,r)\leqslant\pi(G_1)\pi(G_j)+\displaystyle\sum_{i\neq 1,j}^n(\pi(G_i)-1) $.
	\end{itemize}
	Therefore $ \pi(B)=\max_r\pi(B,r)\leqslant\pi(G_1)\pi(G_2)+\displaystyle\sum_{i=3}^n(\pi(G_i)-1) $.
	\qed
\end{proof}


\vspace*{2cm}


\end{document}